\newcommand*\email[1]{\href{mailto:#1}{\nolinkurl{#1}}}
\newcommand*\thankmail[1]{}
\title{A classification of volume preserving generating forms in $\RR^3$}
\author[1]{Olivier Verdier\thankmail{olivier.verdier@math.umu.se}}
\author[2]{Huiyan Xue\thankmail{Huiyan.Xue@math.uib.no}}
\author[2]{Antonella Zanna\thankmail{Antonella.Zanna@math.uib.no}}
\affil[1]{Department of Mathematics and Mathematical Statistics, Ume\aa{} University, Sweden}
\affil[2]{Department of Mathematics, University of Bergen\\Johannes Brunsgt 12, 5008 Bergen, Norway}
\date{}
\newcommand*\RR{\mathbb{R}}
\newcommand*\demph[1]{\textbf{#1}}
\newcommand*\inv{^{-1}}
\newcommand*\xm{\xx{-}}
\newcommand*\xp{\xx{+}}
\newcommand*\xz{\xx{\circ}}
\newcommand*\Xm{\XX{-}}
\newcommand*\Xp{\XX{+}}
\newcommand*\Xz{\XX{\circ}}
\newcommand*\XX[1]{X_{#1}}
\newcommand*\xx[1]{x_{#1}}
\newcommand*\YY[1]{Y_{#1}}
\newcommand*\yy[1]{y_{#1}}
\newcommand*\ZZ[1]{Z_{#1}}
\newcommand*\zz[1]{z_{#1}}
\newcommand*\pot{\phi}
\newcommand*\Pot{\Phi}
\newcommand*\perm{\sigma}
\newcommand*\Perm{\Sigma}
\newcommand\arperm{\pi}
\NewDocumentCommand\volperm{O{\perm,\Perm}}{\Psi_{#1}}
\NewDocumentCommand\permact{mm}{(#1) \act #2}
\NewDocumentCommand\volpot{O{\pot,\Pot}O{\sig}}{\mathbf{f}_{#1}\IfNoValueF{#2}{^{#2}}}
\newcommand*\volpota{\mathbf{f}}
\newcommand*\vf{\mathbf{a}}
\newcommand*\pd[2][]{\partial_{#2} #1}
\newcommand*\act{\cdot}
\newcommand*\sig{\pm}
\newcommand*\pfl{p}
\begin{document}

\maketitle
\begin{abstract}
In earlier work, Lomeli and Meiss \cite{lomeli09gff} used a generalization of the symplectic approach to study volume preserving generating differential forms. In particular, for the $\RR^3$ case, the first to differ from the symplectic case, they derived thirty-six one-forms that generate exact volume preserving maps. In \cite{xue2014gf}, Xue and Zanna studied these differential forms in connection with the numerical solution of divergence-free differential equations: can such forms be used to devise new volume preserving integrators or to further understand existing ones? As a partial answer to this question, Xue and Zanna showed how six of the generating volume form were naturally associated to  consistent, first order, volume preserving numerical integrators.
In this paper, we investigate and classify the remaining cases. The main result  is the reduction of the thirty-six cases  to five essentially different cases, up to variable relabeling and adjunction. 
We classify these five cases, identifying two novel classes and associating the other three to volume preserving vector fields under a Hamiltonian or Lagrangian representation. We demonstrate how these generating form lead to consistent volume preserving schemes for volume preserving vector fields in $\RR^3$. 
\end{abstract}

\section{Introduction and background}
\label{laha}
The scope of this paper is the study volume preserving generating forms, with the ultimate goal of exploiting these differential forms to obtain consistent numerical methods that preserve volume for arbitrary volume preserving vector fields. 
This task is particularly hard: there exist \emph{no-go} theorems \cite{chartier2007preserving, iserles07} stating that  it is not possible to construct volume preserving methods for generic $n$-dimensional volume preserving vector fields within the class of B-series methods, a class that includes classical integrators like Taylor-expansion based methods, Runge--Kutta methods and multistep methods. On the other hand, volume preserving methods can be constructed using the technique of \emph{splitting} \cite{McLachlan2002}. Splitting methods correspond to P-series (``P'' for partitioned systems), a generalization of B-series. 
Several splitting techniques can be adopted. The earliest and best known splitting consists in decomposing the vector field in  2D Hamiltonian sub-systems \cite{kang95vpa}, which are then solved by a symplectic method. More recently, research has focussed on explicit splitting methods for classes of vector fields, like polynomial or trigonometric, which are  wide enough to include most interesting cases, but not as large as the space of all possible vector fields \cite{quispel03evp,mclachlan04egi, mclachlan09evp, Xue:2012mf,zanna2014tensor}. 

Differently from the symplectic case, the generating form approach to generate volume preserving numerical methods is not well understood.  
Earlier work by \cite{shangXXgf1, Shang1994} extends the Hamiltonian technique of \cite{F1986, FW1989}, using linear maps in the product space, to volume preserving forms, thus obtaining an equivalent of the Hamilton-Jacobi differential equation \cite{Shang1994}. To obtain a first and second order scheme, Shang had to impose simplifying conditions, requiring the transformation matrix to be a special case of Hadamard matrix. However, the numerical integrators by this approach are quite complicated, as they are defined via implicit maps, although the approach is valid for arbitrary vector fields.
Another generating-functions related approach is due to \cite{GRW199526}: through a special combinations of explicit and implicit maps, Quispel shows that the resulting method is volume preserving. This is a ``correction method'': starting from an arbitrary numerical integrator, one has to consider an extra term (the corrrection) for volume preservation.
The above mentioned two approaches do not use differential forms directly, rather, they use the equivalent condition on the determinant of the Jacobian of the map. For this reason, they use the terminology of ``generating functions'' rather than ``generating forms''.

More recently,  L{\'o}meli and Meiss \cite{lomeli09gff} have studied the problem of volume preserving maps using differential forms and generalization of the symplectic approach. They discussed in detail the $\RR^3$ case, the first to differ from the symplectic case, and described how the generic volume preserving maps can be described by thirty-six one-forms.
That paper paves the background for our investigations. In particular, we are interested in understanding how these differential forms are associated to numerical methods (if any) and whether some of these forms can lead to new techniques to obtain volume preserving maps. These questions were partially addressed in \cite{xue2014gf}, where six of the thirty-six differential one forms were identified and associated to splitting methods. The scope of this paper is to discuss  and classify the remaining cases. 

The main result of the paper is the reduction of the thirty-six cases to five essentially different cases, using equivalence relations (global variable renaming and numerical adjoints). Thereafter, these five cases are classified and three of them associated to known techniques, based on Hamiltonian and/or Lagrangian formalism.  In other words, the generating forms are associated to symplectic splitting methods like Symplectic Euler (SE) for Hamiltonian systems, or Discrete Lagrangian (DL) methods for appropriate Lagrangian functions, or a combination of both. We further identify two  special classes, $S_1$ and $S_2$, that, to our knowledge, do not have a straightforward and direct mechanical interpretation. It is these two special classes that are of particular interest in the search of new volume preserving algorithms. We give explicit formulas of generating one-forms for the class $S_1$ and $S_2$ corresponding to volume preserving first order methods in the specific case of linear vector fields. A general approach is still unknown and will be the subject of future investigation.

\subsection{Background and notation}
\label{sec:background}

We consider a differentiable manifold $\mathcal{M}$. Let $\omega$ be a non degenerate $k$-differential form on $\mathcal{M}$, that is, a $k$-linear map, completely skew-symmetric with respect to its arguments. For each $p \in \mathcal{M}$, $\omega(p): (T_p\mathcal{M})^{\times k} \to \RR$, namely the differential form takes as argument $k$ tangent vectors and returns a number. The coefficients of the differential form might depend on $p\in \mathcal{M}$.
Let $d \omega$ be the $k+1$ form obtained with the usual rules of external derivation. Recall that $\omega$ is \emph{closed}  if $d\omega = 0$ and that $\omega$ is  an exact differential, or simply \emph{exact}, if $\omega = d\nu$, where $\nu$ is a $k-1$ form, called a \emph{primitive}.
By application of Stokes' theorem, $\int_S d \omega = \int_{\partial S} \omega$, valid on any oriented manifold with oriented boundary $\partial S$, to the differential form $d\omega$, it follows that $d^2\omega=0$. Therefore, any exact form, $\omega = d \nu$, is closed, i.e.\ $d\omega = d^2\nu = 0$. The reverse statement is not true in general, but it holds on contractible manifolds, as explained from the following lemma.

\begin{lemma}[Poincar{\'e} lemma]
\label{th:poincare}
A closed form ($d\omega=0$) is locally exact ($\omega=d\nu$), that is, there is a neighborhood $U$ about each point on which $\omega=d\nu$. The statement is globally true on contractible manifolds.
\end{lemma}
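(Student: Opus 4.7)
The plan is to establish the local statement via an explicit chain-homotopy operator on star-shaped coordinate neighborhoods (the classical ``Poincaré operator'') and then to promote it to the global statement on a contractible manifold by a homotopy-invariance argument.

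First I would reduce the local claim to the following model: a star-shaped open set $U \subseteq \RR^n$ about the origin. Any $p \in \mathcal{M}$ admits a chart $\varphi\colon V \to U$ with $\varphi(p)=0$ and $U$ star-shaped about $0$; since exterior differentiation commutes with pullback by the diffeomorphism $\varphi$, it suffices to produce a primitive of a closed $k$-form (with $k\geq 1$) on $U$. For the construction, let $H_t(x) = tx$ for $t\in[0,1]$ and let $R = \sum_i x^i \partial_i$ be the radial vector field. Define the Poincaré operator $K\colon \Omega^k(U) \to \Omega^{k-1}(U)$ by
\[
K\omega := \int_0^1 H_t^{*}(\iota_R \omega)\,\frac{dt}{t}.
\]
The apparent singularity at $t=0$ is removable because, written out in coordinates, $H_t^{*}(\iota_R\omega)$ carries an overall factor $t^{k}$ with $k \geq 1$, so the integrand extends smoothly to $t=0$. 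Combining Cartan's magic formula $\mathcal{L}_R = d\iota_R + \iota_R d$, the flow identity $\frac{d}{dt} H_t^{*}\omega = \tfrac{1}{t} H_t^{*}(\mathcal{L}_R \omega)$, and the fundamental theorem of calculus, a direct computation yields the chain-homotopy identity
\[
d(K\omega) + K(d\omega) = H_1^{*}\omega - \lim_{t\to 0^+} H_t^{*}\omega = \omega,
\]
the last equality using that pullback by the constant map $H_0\equiv 0$ kills $k$-forms for $k\geq 1$. For closed $\omega$ this collapses to $\omega = d(K\omega)$, so $\nu := K\omega$ is the sought local primitive.

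For the global statement, assume $\mathcal{M}$ is contractible and fix a smooth homotopy $H\colon [0,1]\times \mathcal{M}\to \mathcal{M}$ with $H_0$ a constant map and $H_1 = \mathrm{id}_{\mathcal{M}}$. Pulling back along $H$ to the cylinder and integrating out the $dt$-component in the decomposition $H^{*}\omega = \alpha_t + dt\wedge\beta_t$ produces a global chain-homotopy $\widetilde K\colon \Omega^k(\mathcal{M}) \to \Omega^{k-1}(\mathcal{M})$ satisfying $d\widetilde K + \widetilde K d = \mathrm{id} - H_0^{*}$; for $k \geq 1$ the term $H_0^{*}\omega$ again vanishes, so every globally closed form admits $\widetilde K\omega$ as a global primitive.

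The only genuinely non-formal step — and the one I expect to cost attention — is verifying that the integrand defining $K$ (and its analogue $\widetilde K$) extends smoothly through $t=0$ and that differentiation may be interchanged with the integral; both reduce in local coordinates to an inspection of polynomial-in-$t$ coefficients, after which the entire argument is a mechanical application of Cartan's formula.
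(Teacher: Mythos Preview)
Your argument is correct and is exactly the classical chain-homotopy proof of the Poincar\'e lemma: the radial retraction $H_t(x)=tx$ on a star-shaped chart yields the homotopy operator $K$, Cartan's formula gives $dK+Kd=\mathrm{id}$ on forms of positive degree, and the same mechanism globalises via a contraction of $\mathcal{M}$. The only technical checks you flag (smooth extension of the integrand to $t=0$ and differentiation under the integral) are routine and you have identified them correctly.

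There is, however, nothing to compare against: the paper does not prove Lemma~\ref{th:poincare}. It is stated in \S\ref{sec:background} purely as background, in the same breath as Stokes' theorem and the definition of closed and exact forms, and is invoked only to justify that $\mathbf{f}^{*}\nu-\nu$ is exact on $\RR^n$. Your write-up thus supplies a proof the authors deliberately omitted as standard; it is sound, but goes well beyond what the paper itself offers.
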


In the sequel, we focus on volume forms and their primitives. 
We will assume, otherwise stated, that the differential forms are \emph{non-degenerate}. This means that the coefficients of the form are never simultaneously zero. 

\begin{definition}
A volume form $\Omega$ on a manifold $\mathcal{M}$ is preserved by a $C^{1}$-map $\mathbf{f}: \mathcal{M}\mapsto \mathcal{M}$  if 
\begin{equation}
	\mathbf{f}^{*}\Omega=\Omega,
	\label{eq:vol_pres}
\end{equation}
where $\mathbf{f}^{*}$ denotes the pull-back of $\mathbf{f}$.  The map $\mathbf{f}$ is said to be \emph{canonical} or \emph{volume preserving}.
\end{definition}

In what follows, we let $\mathcal{M}=\RR^n$. Let $\nu$ be any primitive form of $\Omega$, i.e.\ $d\nu=\Omega$. 
Then, condition \eqref{eq:vol_pres} becomes $\mathbf{f}^{*}d\nu-d\nu=0$
and implies $d(\mathbf{f}^{*}\nu-\nu)=0$, hence
$\mathbf{f}^{*}\nu-\nu$ is the exact differential of a $n-2$ form, as
a consequence of Lemma~\ref{th:poincare}. This motivates 
Definitions \ref{def:nu_exact} and \ref{df} below, see \cite{lomeli09gff}. 

\begin{definition}
\label{def:nu_exact}
Let $\nu$ be a primitive of the volume form $\Omega$  and $\mathbf{f}: \RR^n\mapsto \RR^n$ an  exact volume preserving diffeomorphism  such that
\begin{equation}
 \mathbf{f}^{*}{\nu}-\nu=d\lambda
 \label{eq:nu_exact},
\end{equation}
for a $n-2$ form $\lambda$. The differential form $\lambda$ is called a \emph{generating form with respect to $\nu$.}
\end{definition}

Primitives of forms are not uniquely determined: by choosing $\tilde \nu$ another primitive of $\Omega$, the volume preservation condition \eqref{eq:vol_pres} can be written as $\mathbf{f}^{*}d\nu-d\tilde\nu=0$. A procedure similar to the one just described above leads to:
\begin{definition}
\label{df}
Let $\nu, \tilde \nu$ be two primitives of a volume form $\Omega$, i.e.\ $d\nu=d\tilde{\nu}=\Omega$ and   $\mathbf{f}: \RR^n\mapsto \RR^n$ an exact volume preserving diffeomorphism such that
\begin{equation}
 \mathbf{f}^{*}\tilde{\nu}-\nu=d\lambda\label{eq:0806031},
\end{equation}
 for a $n-2$ form $\lambda$.  The $n-2$ differential form $\lambda$ is called a \emph{generating form with respect to $(\nu, \tilde \nu)$.}
\end{definition}

We will consider the choice of canonical coordinates $x_1, \ldots, x_n$ in $\RR^n$ and denote by $\mathbf{x} = (x_1, \ldots, x_n)^T$ the original (old) variables. Given a volume preserving map $\mathbf{f}$, we will denote the transformed (new) variables 
by uppercase letters, i.e.\ $\mathbf{X} = (X_1, \ldots, X_n)^T = \mathbf{f}(\mathbf{x})$.

Volume preservation in $\RR^2$ is equivalent to preservation of area and volume forms are the same as symplectic forms. 
This case is well understood. 
The case $n=3$ is the first case for which volume forms and symplectic forms are different.

\section{Generating forms in $\RR^3$}
\label{volsec}

In \cite{lomeli09gff}, Lomeli and Meiss studied in detail exact volume preserving mappings and generating forms  in $\RR^3$. 
Starting from \eqref{eq:0806031} and using canonical coordinates, they showed that for each choice of primitives $(\nu, \tilde{\nu})$ there are four different generating one-forms, 
\begin{equation}
	\lambda= \phi dx_l + \Phi d X_m,  \qquad \phi \in \{A,B\}, \quad \Phi \in \{C,D\}\qquad l, m \in \{1,2,3\},
	\label{eq:oneform}
\end{equation}
which they identified using four generating functions $A, B, C$ and $D$.

As $\nu$ and $\tilde{\nu}$ can be chosen in three different ways
($x_3dx_1\wedge dx_2$, $x_2dx_3\wedge dx_1$ and $x_1dx_2\wedge dx_3$),
this approach gives a total of thirty-six generating one-forms. Four
of them, corresponding to $\nu= \tilde \nu = x_3 dx_1 \wedge d x_2$,
are shown in Table~\ref{tab:123123}.  Each cell in the table is described by: a generating one-form $\lambda$; two \emph{determining conditions}, determining a lowercase and an uppercase variable;  a \emph{compatibility condition} and two \emph{twist conditions} to guarantee that the three equations are solvable and they give rise to a well-defined volume preserving map.
Altogether, one  there are nine such tables, 
obtained by even permutations of the $x_1,x_2,x_3$ and the $X_1,X_2,X_3$ variables.

\begin{table}[t]
\centering
  \begin{tabular}{ l | c | c | }
  $X_3 dX_1 \wedge dX_2$ $-$ && \\
  \quad $x_3 dx_1 \wedge dx_2$ & $A dx_1$& $B dx_2$\\  \hline
    $C dX_1$ & $\begin{array}{ccc}
 \lambda = A(x_1, x_2, X_1) dx_1 \\
 \qquad \quad \mbox{}+ C(x_1, X_1, X_2) d X_1\\[5pt]
x_3=\partial_{x_2}A \\
\partial_{X_1} A = \partial_{x_1} C\\
X_3 = -\partial_{X_2} C\\
\frac{\partial X_1}{\partial x_3} \not=0, \quad \frac{\partial x_1}{\partial X_3} \not=0
\end{array} $ &  $\begin{array}{ccc}
\lambda = B(x_1, x_2, X_1) dx_2 \\
\qquad \quad \mbox{}+ C(x_2, X_1, X_2) d X_1\\[5pt]
x_3=-\partial_{x_1}B \\
\partial_{X_1} B = \partial_{x_2} C\\
X_3 = -\partial_{X_2} C\\
\frac{\partial X_1}{\partial x_3} \not=0, \quad \frac{\partial x_2}{\partial X_3} \not=0
\end{array} $\\ \hline
     $D d X_2$ & $\begin{array}{ccc}
\lambda = A(x_1, x_2, X_2) dx_1 \\
\qquad\quad  \mbox{}+ D(x_1, X_1, X_2) d X_2\\[5pt]
x_3=\partial_{x_2}A \\
\partial_{X_2} A = \partial_{x_1} D\\
X_3 = \partial_{X_1} D\\
\frac{\partial X_2}{\partial x_3} \not=0, \quad \frac{\partial x_1}{\partial X_3} \not=0
\end{array} $ &  $\begin{array}{ccc}
\lambda = B(x_1, x_2, X_2) dx_2 \\
 \qquad \quad \mbox{}+ D(x_2, X_1, X_2) d X_2\\[5pt]
x_3=-\partial_{x_1}B \\
\partial_{X_2} B = \partial_{x_2} D\\
X_3 = \partial_{X_1} D\\
\frac{\partial X_2}{\partial x_3} \not=0, \quad \frac{\partial x_2}{\partial X_3} \not=0
\end{array} $\\
    \hline
  \end{tabular}
  \caption{The four basic types of generating 1-forms $\lambda$ for $\nu = \tilde \nu = x_3 dx_1 \wedge d x_2$, adapted from \cite{lomeli09gff}. All the other tables are obtained by applying cyclic even permutations to the variables $(x_1,x_2,x_3)$ in $\nu$ and $(X_1,X_2,X_3)$ in $\mathbf{f}^*\tilde{\nu}$.}
  \label{tab:123123}
\end{table}

In \cite{xue2014gf}, Xue and Zanna studied these generating forms with the goal of associating them to volume preserving vector fields and numerical volume preserving  integrators. They succeeded in identifying six cases and associating them to splitting methods 
in using two potential functions: each of the functions gave rise to a two-dimensional Hamiltonian approximated by a Symplectic Euler (SE) method. The six cases are not fundamentally different. First of all, having chosen one case, two of the other cases correspond to a global variable renaming (say, $(x_1,x_2,x_3) \to (x_2,x_3,x_1)$ and $(X_1,X_2,X_3) \to (X_2,X_3,X_1)$ simultaneously). Relabeling the variables in a numerical method does not give a new numerical method.
Secondly, the remaining three cases were obtained by exchanging lower
cases  and reversing time, in other words, they corresponded to the
\emph{adjoint} numerical methods of the previous three cases. 
As it is known how to obtain the adjoint of a given method \cite{hairer87sod}, these cases are not interesting per se either.
All this indicates that there is redundancy in the  thirty-six
cases. To classify and understand which one forms are related to known
methods and which forms can lead to genuinely new approaches, we need
to establish equivalence classes, so that our search can be restricted
to a single differential form for each class.

\section{Problem statement}

We consider the ordinary differential equation
\begin{equation}
\dot{\mathbf{x}}=\mathbf{a}(\mathbf{x}),  ~~\mathbf{x}(0)=\mathbf{x}_{0}, 
\label{eq:vf}
\end{equation}
where $\mathbf{x} \in \mathcal{M}$ and $\mathbf{a} :\mathcal{M} \to
T_\mathbf{x} \mathcal{M}$, $\mathbf{a}(\mathbf{x}) = [a_1(\mathbf{x}),
\ldots, a_n(\mathbf{x})]^T$, is a smooth vector field. We denote by
$\mathbf{a}^t$ the flow of \eqref{eq:vf} and by $\omega$ a $k$-form. 

Derivatives of differential forms along the flow are called \emph{Lie derivative} and defined as
\begin{equation}
	L_{\mathbf{a}} \omega = \frac{d}{dt} (\mathbf{a}^t)^* \omega \Big|_{t=0}
\label{eq:lie_deriv}
\end{equation}
Let $\Omega$ be a volume form on $\mathcal{M}$. We say that the vector field $\mathbf{a}$ is \emph{volume preserving} if
\begin{equation}
	L_{\mathbf{a}} \Omega =0.
	\label{eq:volpres_a}
\end{equation}
The flow $\mathbf{a}^t$ is volume preserving if its vector field $\mathbf{a}$ is volume preserving. By \eqref{eq:lie_deriv}, this implies that
\begin{displaymath}
 	(\mathbf{a}^t)^* \Omega = \Omega.
\end{displaymath}
Let $i_\mathbf{a}$ a contraction, that is, for any $k$-form $\omega$, $i_\mathbf{a} \omega$ is the $k-1$ form $\omega (\mathbf{a}, \mbox{} \cdot \mbox{})$ obtained by inserting $\mathbf{a}$ in the first slot.
By Cartan's formula for Lie derivatives,
\begin{equation}
	L_{\mathbf{a}} \Omega = d (i_\mathbf{a} \Omega) + i_{\mathbf{a}} d \Omega,
\label{eq:Cartan}
\end{equation}
it follows that a vector field $\mathbf{a}$ is volume preserving if $d (i_\mathbf{a} \Omega) =0$, that is, $ i_\mathbf{a} \Omega $ is closed (as $d \Omega =0$, being $\Omega$ a $n$-form).

\begin{definition}
Let $\lambda$ be a $n-2$ form and $\Omega$ a volume form on $\mathcal{M}$. A vector field $\mathbf{a}$ on $\mathcal{M}$  is \emph{exact volume preserving} with respect to the potential form $\lambda$ if
\begin{equation}
	i_\mathbf{a} \Omega = d \lambda.
	\label{eq:exactvp}
\end{equation}
\end{definition}
In particular, it follows from Poincar{\'e}'s lemma~\ref{th:poincare} that, when $\mathcal{M}=\RR^n$, globally defined volume preserving vector fields are also exact.

Let us consider  the case $n=3$ in more detail. For any vector $\mathbf{v}_i \in \RR^3$, $i=1,2,3$, we have $\Omega(\mathbf{v}_1, \mathbf{v}_2,\mathbf{v}_3)= dx_1 \wedge dx_2 \wedge dx_3 (\mathbf{v}_1, \mathbf{v}_2,\mathbf{v}_3)= \det[\mathbf{v}_1, \mathbf{v}_2,\mathbf{v}_3]$. 
By direct computation, we see that $\Omega (\mathbf{a}, \mathbf{v}, \mathbf{w}) = [a_1 dx_2 \wedge dx_3 + a_2 dx_3 \wedge d x_1 + a_3 d x_1 \wedge d x_2] (\mathbf{v}, \mathbf{w})$ for any $\mathbf{v}, \mathbf{w}$. We deduce that
\begin{displaymath}
 	i_\mathbf{a}\Omega = a_1 dx_2 \wedge dx_3 + a_2 dx_3 \wedge d x_1 + a_3 d x_1 \wedge d x_2.
\end{displaymath}
There are three natural choices of the one-form $\lambda$, i.e.\ $\lambda_i = F^{i}(x_1,x_2,x_3) d x_i$, $i=1,2,3$, where the $F^i$s are arbitrary function. Then,
\begin{displaymath}
	d \lambda_i = \sum_{j=1}^3 \partial_{x_j} F^i d x_j \wedge d x_i, \qquad i=1,2,3,
\end{displaymath}
and, by \eqref{eq:exactvp}, we deduce that a volume preserving vector field, exact with respect to the form $\lambda_i$, must have the following form:
\begin{align}
\label{eq:vf1}
 F^1 d x_1 &: \qquad
 \begin{array}{ccc}
a_1 &=& 0,\\
a_2 &=& \partial_{x_3} F^{1},\\
a_3 &=& -\partial_{x_2} F^1,
\end{array} \\
\label{eq:vf2}
 F^2 d x_2 &: \qquad
\begin{array}{ccc}
 a_1 &=& -\partial_{x_3} F^2,\\
a_2 &=& 0,\\
a_3 &=& \partial_{x_1} F^2,
\end{array}\\
\label{eq:vf3}
F^3 d x_3 &:\qquad
\begin{array}{ccc}
a_1 &=& \partial_{x_2} F^3,\\
a_2 &=& -\partial_{x_1} F^3,\\
a_3 &=& 0.\end{array}
\end{align}
A generic three-dimensional volume preserving vector field will be a
combination of \eqref{eq:vf1}-\eqref{eq:vf3} above. Note that only two
of them are linearly independent, the same yields for the choices of
$\lambda_i$: for instance,  as long as $F^1, F^2$ depend on all
variables, $d \lambda_1$ and $d \lambda_2$ generate
all the $dx_i \wedge dx_j$, $i, j=1,2,3$ although $\lambda_1$ contains
only $d x_1$ and $\lambda_2$ only $dx_2$. 

This is a consequence of a well known result for volume preserving flows in $\RR^n$: any $n$-dimensional volume preserving differential equation is described by $n-1$ independent potential functions (see for instance \cite{McLachlan2002,kang95vpa, FW1989,xue2014gf} and discussion therein). One of the earliest normalization of the $n-1$ independent potential functions is due to Weyl \cite{weyl40tmo}.

It is natural to draw a connection between the generating forms
\eqref{eq:oneform}
 and the volume preserving vector field $\vf$. In particular, we address the following questions:
\begin{itemize}
\item How do the generating functions $\phi, \Phi$ relate to  the
  vector field $\vf$ and the $F^{i}$s, if there is any relation?
\item Can the generating forms \eqref{eq:oneform} be naturally
  associated to volume preserving numerical methods for \eqref{eq:vf}
  whenever the vector field $\mathbf{a}$ in \eqref{eq:vf} is volume
  preserving? 
\end{itemize}
An important property of numerical methods is
\emph{consistency}. Numerical methods introduce a discrete time step
$h$. A fundamental property required to the method is that, in the limit $h\to 0$,
$\frac{\mathbf{X}-\mathbf{x}}{h} = \vf$, that is, the method solves the given differential equation.

We now define what is the main goal of this paper: to find suitable
potential functions $\pot$ and $\Pot$ for a divergence free vector
field $\vf$ which also are a solution of the \emph{consistency
  problem}, below rephrased in the formalism of this paper for convenience.

The main goal of this section is to show that, subject to consistency,
there are only five different classes of generating volume forms in $\RR^3$.

\subsection{Defining equations and compatibility conditions}

Consider two arbitrary smooth functions
\begin{equation}
\pot,\Pot \colon \RR^3 \to \RR
,
\end{equation}
and an arbitrary sign, which will be denoted by $\sig$ in the equations.

Define the corresponding mapping $\volpot\colon (\yy1,\yy2,\yy3) \mapsto(\YY1,\YY2,\YY3)$ implicitly by the following equations.
\begin{subequations}
	\label{eq:volpot}
	\begin{align}
		\partial_{2} \pot(\YY3, \yy2, \yy3) &=\yy1 \label{eq:volpot1}  \\
		\partial_{3} \Pot(\YY3,\YY2,\yy3) &\sig \partial_{1} \pot(\YY3,\yy2, \yy3) = 0 \label{eq:volpot2}\\
		\YY1 &= \partial_{2} \Pot(\YY3, \YY2, \yy3). \label{eq:volpot3}
\end{align}
\end{subequations}

The mapping $\volpot$ is well defined as soon as 
$\pd[\Pot]{32} \neq 0$ and $\pd[\pot]{21} \neq 0$.

We define the \demph{action} of a permutation $\perm\colon\set{1,2,3}\to\set{1,2,3}$ on an element $\mathbf{x}\in\RR^3$ by
\begin{align}
	(x \act \perm)_i \coloneqq x_{\perm(i)}
	.
\end{align}
Consider two permutations $\perm$ and $\Perm$:
\begin{equation}
	\perm,\Perm \colon \set{1,2,3} \to \set{1,2,3}	.
\end{equation}
For any map $\volpota \colon \RR^3 \to \RR^3$, we define the map $\permact{\perm,\Perm}{\volpota}$ by
\begin{align}
	\paren[\big]{\permact{\perm,\Perm}{\volpota}}(x) \coloneqq \volpota(x \act \perm) \act \Perm\inv
	.
\end{align}

\subsection{The consistency problem}
\begin{definition}
	\label{def:conssol}
We say that a pair of maps
$\pot, \Pot \colon \mathfrak{X}_0(\RR^3) \to \mathcal{C}^{\infty}(\RR,\RR^3)$,
where $\mathfrak{X}_0(\RR^3)$ denotes the space of divergence-free vector fields on $\RR^3$,
is a \demph{solution to the consistency problem} for the pair of permutations $(\perm,\Perm)$ if
 the map $h \mapsto \permact{\perm,\Perm}{\volpot[\pot(h\vf),\Pot(h\vf)]}$ is \emph{consistent} with $\vf$.

This means that
\begin{align}
	\lim_{h\to 0} \frac{1}{h}\paren[\Big]{\paren[\big]{\permact{\perm,\Perm}{\volpot[\pot(h\vf),\Pot(h\vf)]}}(\mathbf{x}) - \mathbf{x}} = \vf
	.
\end{align}
\end{definition}

Note that for any fixed vector field $\vf$, the element $\pot(\vf)$ is a potential, i.e., $\pot(\vf)$ is itself a function from $\RR^3$ to $\RR$.
Note also that a method obtained from \autoref{def:conssol}  will automatically have order one.


\subsection{Case Reduction}

Note that \emph{all the defining equations} and \emph{compatibility
  conditions} corresponding to \eqref{eq:oneform} are of the form
\eqref{eq:volpot1}--\eqref{eq:volpot3} for some choices of $\perm,\Perm, \pot,
\Pot$.

As the permutations $\perm$ and $\Perm$ each range over $3! = 6$
values, it would seem that there are $6\times 6 = 36$ different
problems \eqref{eq:volpot1}--\eqref{eq:volpot3}. 

The purpose of this section is to explain that there are in fact
\emph{at most five} different solutions to the consistency problem, up
to equivalence relations (relabeling and adjunction). 
What this means is that the problem of finding potentials for the thirty-six permutation cases reduces to finding the solution of only five cases.

\subsubsection{Reduction by relabeling}

\begin{proposition}
	\label{prop:permactalg}
	Consider an arbitrary permutation $\arperm$ of $\set{1,2,3}$.
	We have
	\begin{align}
		\permact{\arperm,\arperm}{\paren[\Big]{\permact{\perm,\Perm}{\volpot}}} = \permact{\arperm \perm,\arperm\Perm}\volpot
		.
	\end{align}
\end{proposition}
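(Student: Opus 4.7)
The plan is to unfold both sides of the claimed equality directly from the definition $(\permact{\perm,\Perm}{\volpota})(x) = \volpota(x\act\perm)\act\Perm\inv$ and to identify the results term by term. The only ingredient beyond this definition is a short compatibility lemma stating that $\act$ is a right action of the symmetric group on $\RR^3$ (with the usual composition convention), together with the group-theoretic identity $(\arperm\Perm)\inv = \Perm\inv\arperm\inv$.

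First I would verify the auxiliary identity
\begin{align}
	(x\act\perm_1)\act\perm_2 = x\act(\perm_1\perm_2)
\end{align}
for arbitrary $x\in\RR^3$ and permutations $\perm_1,\perm_2$ of $\set{1,2,3}$. This follows directly from the coordinate-wise definition: the $i$-th component of the left-hand side equals $(x\act\perm_1)_{\perm_2(i)} = x_{\perm_1(\perm_2(i))} = x_{(\perm_1\perm_2)(i)}$, matching the right-hand side. The same identity applies on the image side, since the target also lives in $\RR^3$.

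Next, applying the definition twice (first to the outer operator, then to the inner one) gives
\begin{align}
	\bigl(\permact{\arperm,\arperm}{(\permact{\perm,\Perm}{\volpot})}\bigr)(x)
	= \bigl(\volpot\bigl((x\act\arperm)\act\perm\bigr)\act\Perm\inv\bigr)\act\arperm\inv.
\end{align}
Applying the right-action identity to the argument of $\volpot$ turns $(x\act\arperm)\act\perm$ into $x\act(\arperm\perm)$. Applying it once more to the trailing pair of $\act$ operations on the output, together with $\Perm\inv\arperm\inv = (\arperm\Perm)\inv$, turns the suffix $\act\Perm\inv\act\arperm\inv$ into $\act(\arperm\Perm)\inv$. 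The resulting expression is precisely $(\permact{\arperm\perm,\arperm\Perm}{\volpot})(x)$, as required.

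The only thing to watch out for — though not a substantive obstacle — is convention: one must confirm that the paper's coordinate-wise definition makes $\act$ a right action of the symmetric group rather than a left one (it does), since the pairing $\arperm\perm$ and $\arperm\Perm$ on the right-hand side of the proposition relies on this orientation. Once that is pinned down, the remainder is purely algebraic bookkeeping with no analytic content; in particular the statement holds for any map $\volpot$, and the specific structure of the implicit equations \eqref{eq:volpot1}--\eqref{eq:volpot3} is not used.
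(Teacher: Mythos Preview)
Your argument is correct and is essentially the same as the paper's: both prove the more general identity $\permact{\arperm,\arperm'}{(\permact{\perm,\Perm}{\volpota})} = \permact{\arperm\perm,\arperm'\Perm}{\volpota}$ for an arbitrary map $\volpota$ by unfolding the definition, with you spelling out the right-action property of $\act$ that the paper leaves implicit.
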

\begin{proof}
	Observe that for any function $\volpota\colon\RR^3\to\RR^3$ and any permutations $\arperm$ and $\arperm'$ we have
	\(
		\permact{\arperm,\arperm'}{\paren[\big]{\permact{\perm,\Perm}{\volpota}}}
		=
		{\permact{\arperm\perm,\arperm'\Perm}{\volpota}}
	\).
	The claim follows immediately.
\end{proof}

\begin{proposition}
	\label{prop:trivperm}
	Consider an arbitrary permutation $\arperm$ of $\set{1,2,3}$.
	If $\pot, \Pot$ is a solution for 
         $(\perm,\Perm)$ (\autoref{def:conssol}),
        then the pair of maps
        \begin{align}
		\widetilde{\pot}(\vf) \coloneqq \pot\paren[\big]{\permact{\arperm\inv,\arperm\inv}{\vf}}		,
		\qquad
		\widetilde{\Pot}(\vf) \coloneqq \Pot\paren[\big]{\permact{\arperm\inv,\arperm\inv}{\vf}}		
	\end{align}
		is a solution for
        $(\arperm\perm,\arperm\Perm)$.
\end{proposition}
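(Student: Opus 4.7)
The plan is to transfer the consistency hypothesis for $(\perm,\Perm)$ to the target pair $(\arperm\perm,\arperm\Perm)$ by reindexing through \autoref{prop:permactalg} and exploiting that the outer action by $(\arperm,\arperm)$ is, to first order in $h$, simply a coordinate relabeling of the velocity field. To this end, set $\vf' \coloneqq \permact{\arperm\inv,\arperm\inv}{\vf}$. Since a diagonal action is just a linear coordinate change on vector fields, $\vf'$ is smooth and divergence-free whenever $\vf$ is, so the hypothesis on $(\pot,\Pot)$ applies to $\vf'$. Note also that by construction $\widetilde\pot(h\vf) = \pot(h\vf')$ and $\widetilde\Pot(h\vf) = \Pot(h\vf')$.

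Using \autoref{prop:permactalg}, the method on the target side factors as
\[
\permact{\arperm\perm,\arperm\Perm}{\volpot[\widetilde\pot(h\vf),\widetilde\Pot(h\vf)]} = \permact{\arperm,\arperm}{\bigl(\permact{\perm,\Perm}{\volpot[\pot(h\vf'),\Pot(h\vf')]}\bigr)}.
\]
Writing $G_h$ for the map appearing inside the outer action, the hypothesis that $(\pot,\Pot)$ solves the consistency problem for $(\perm,\Perm)$, applied to the divergence-free field $\vf'$, says exactly that $G_h(\mathbf{x}) = \mathbf{x} + h\,\vf'(\mathbf{x}) + o(h)$ as $h \to 0$.

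The key computational step is the following naturality of the first-order expansion under the diagonal action: for any family of smooth maps $G_h$ with $G_h(\mathbf{x}) = \mathbf{x} + h\,\mathbf{v}(\mathbf{x}) + o(h)$, one has $(\permact{\arperm,\arperm}{G_h})(\mathbf{x}) = \mathbf{x} + h\,(\permact{\arperm,\arperm}{\mathbf{v}})(\mathbf{x}) + o(h)$. This is immediate from the definition $(\permact{\arperm,\arperm}{G_h})(\mathbf{x}) = G_h(\mathbf{x}\act\arperm)\act\arperm\inv$ together with the identity $(\mathbf{x}\act\arperm)\act\arperm\inv = \mathbf{x}$; crucially, having both outer permutations equal is what makes the diagonal action fix the identity map.

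Combining the above with $\permact{\arperm,\arperm}{\vf'} = \permact{\arperm\arperm\inv,\arperm\arperm\inv}{\vf} = \vf$ (another instance of the composition rule from \autoref{prop:permactalg}) yields
\[
\lim_{h\to 0}\frac{1}{h}\Bigl(\bigl(\permact{\arperm\perm,\arperm\Perm}{\volpot[\widetilde\pot(h\vf),\widetilde\Pot(h\vf)]}\bigr)(\mathbf{x}) - \mathbf{x}\Bigr) = \vf(\mathbf{x}),
\]
which is exactly the consistency condition for $(\arperm\perm,\arperm\Perm)$ with $(\widetilde\pot,\widetilde\Pot)$. There is no genuine obstacle in the argument; the only points requiring care are tracking the order of composition in \autoref{prop:permactalg} and verifying the naturality of the first-order expansion under the diagonal coordinate change.
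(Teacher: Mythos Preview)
Your proof is correct and follows essentially the same route as the paper: apply the hypothesis to the relabeled field $\vf' = \permact{\arperm\inv,\arperm\inv}{\vf}$, use \autoref{prop:permactalg} to factor the $(\arperm\perm,\arperm\Perm)$ action through the diagonal $(\arperm,\arperm)$ action, and conclude via $\permact{\arperm,\arperm}{\vf'} = \vf$. The paper's proof is terser and leaves implicit the two points you spell out (that $\vf'$ is still divergence-free, and the naturality of the first-order expansion under the diagonal action), but the logical skeleton is identical.
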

\begin{proof}
		As $\pot,\Pot$ is a solution for $(\perm,\Perm)$, it means that $\volpot[\widetilde{\pot}(h\vf),\widetilde{\Pot}(h\vf)]$ is consistent with the vector field $\permact{\arperm\inv,\arperm\inv}{\vf}$.
		Using \autoref{prop:permactalg}, we obtain that the map $\permact{\arperm\perm,\arperm\Perm}{\volpot[\pot(h\vf),\Pot(h\vf)]}$ is consistent with the vector field $\permact{\arperm,\arperm}{\paren[\big]{\permact{\arperm\inv,\arperm\inv}{\vf}}}$.
		We conclude by using $\permact{\arperm,\arperm}{\paren[\big]{\permact{\arperm\inv,\arperm\inv}{\vf}}} = \vf$.
\end{proof}

\subsubsection{Reduction by adjunction}
For a map $\pot \colon \RR^3 \to \RR$, we define the permuted map $\pot\act\pfl$ by
\begin{align}
	\paren{\pot\act\pfl}(y) \coloneqq \pot(y\act p)
	,
\end{align}
where $\pfl$ is the special permutation defined by:
\begin{align}
	\label{eq:defpfl}
	\pfl \coloneqq (1,2,3) \mapsto (3,2,1)
	.
\end{align}

In concrete terms it just means that the action of $p$ switches the first and last argument:
\begin{align}
	\paren{\pot\act\pfl}(\yy1,\yy2,\yy3) = \pot(\yy3,\yy2,\yy1)
\end{align}

We spend most of the time in this section showing that we can easily compute the inverse of maps such as $\permact{\perm,\Perm}{\volpota}$ and $\volpot$.

\begin{proposition}
	\label{prop:volpotinv}
	The map ${{\volpot[\Pot\act\pfl,\pot\act\pfl]}}$ is the inverse of $\volpot$.
\end{proposition}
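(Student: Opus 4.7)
The plan is a direct computation: unfold the definition of $\volpot[\Pot\act\pfl,\pot\act\pfl]$ and check that the composition with $\volpot[\pot,\Pot]$ is the identity. First I would translate the action of $\pfl$ on partial derivatives: since $\pfl$ swaps the first and third arguments, the chain rule gives $\partial_1(\pot\act\pfl)(a,b,c)=\partial_3\pot(c,b,a)$, $\partial_2(\pot\act\pfl)(a,b,c)=\partial_2\pot(c,b,a)$, and $\partial_3(\pot\act\pfl)(a,b,c)=\partial_1\pot(c,b,a)$, and similarly for $\Pot\act\pfl$. Writing $\tilde\pot\coloneqq\Pot\act\pfl$ and $\tilde\Pot\coloneqq\pot\act\pfl$, the defining system \eqref{eq:volpot} for $\volpot[\tilde\pot,\tilde\Pot]\colon(\zz1,\zz2,\zz3)\mapsto(\ZZ1,\ZZ2,\ZZ3)$ then becomes, after this translation,
\begin{subequations}
\begin{align}
\partial_{2}\Pot(\zz3,\zz2,\ZZ3)&=\zz1,\\
\partial_{1}\pot(\zz3,\ZZ2,\ZZ3)\sig\partial_{3}\Pot(\zz3,\zz2,\ZZ3)&=0,\\
\ZZ1&=\partial_{2}\pot(\zz3,\ZZ2,\ZZ3).
\end{align}
\end{subequations}

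Next I would substitute $\mathbf{z}=\mathbf{Y}\coloneqq\volpot[\pot,\Pot](\mathbf{y})$ and the candidate values $\ZZ1=\yy1$, $\ZZ2=\yy2$, $\ZZ3=\yy3$ into this system, and verify each of the three equations holds. The first equation becomes $\partial_{2}\Pot(\YY3,\YY2,\yy3)=\YY1$, which is exactly \eqref{eq:volpot3}. The third becomes $\yy1=\partial_{2}\pot(\YY3,\yy2,\yy3)$, which is exactly \eqref{eq:volpot1}. The middle compatibility equation becomes $\partial_{1}\pot(\YY3,\yy2,\yy3)\sig\partial_{3}\Pot(\YY3,\YY2,\yy3)=0$, which is \eqref{eq:volpot2} with the same sign $\sig$ (note the terms appear in the opposite order but the relation is symmetric). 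Hence the candidate $\mathbf{Z}=\mathbf{y}$ satisfies the inverse system.

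To conclude, I would invoke uniqueness: the non-degeneracy hypotheses $\pd[\Pot]{32}\neq 0$ and $\pd[\pot]{21}\neq 0$ (which are symmetric under the swap $\pot\leftrightarrow\Pot\act\pfl$, since $\pd[\tilde\Pot]{32}=\pd[\pot]{12}$ and $\pd[\tilde\pot]{21}=\pd[\Pot]{23}$, matching after relabeling) ensure that each of the three equations \eqref{eq:volpot1}--\eqref{eq:volpot3} for $\volpot[\tilde\pot,\tilde\Pot]$ has a unique local solution in the corresponding unknown. Therefore the values $\ZZ{i}=\yy{i}$ found above are the only ones, proving $\volpot[\tilde\pot,\tilde\Pot]\circ\volpot[\pot,\Pot]=\mathrm{id}$. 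A completely symmetric argument (or simply noting that $(\tilde{\tilde\pot},\tilde{\tilde\Pot})=(\pot,\Pot)$ since $\pfl^2=\mathrm{id}$) gives the opposite composition.

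The main bookkeeping obstacle is keeping track of the slot-swap $\partial_1\leftrightarrow\partial_3$ induced by $\pfl$ while ensuring the sign $\sig$ in \eqref{eq:volpot2} is genuinely preserved rather than flipped; once this is verified the rest of the proof is purely a matter of matching the three equations of the inverse system against the three equations of the original one.
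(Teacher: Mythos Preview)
Your proof is correct and follows essentially the same route as the paper: unfold the defining system for $\volpot[\Pot\act\pfl,\pot\act\pfl]$, use the slot-swap identities for $\pfl$, and observe that the three resulting equations coincide with \eqref{eq:volpot1}--\eqref{eq:volpot3} under the relabeling $z\leftrightarrow Y$, $Z\leftrightarrow y$. You are in fact slightly more careful than the paper in explicitly invoking the non-degeneracy conditions for uniqueness and in noting that $\pfl^2=\mathrm{id}$ handles the reverse composition.
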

\begin{proof}
Let us pose $Z \coloneqq \volpot[\Pot\act\pfl,\pot\act\pfl](z)$.
	We also define for convenience
	\begin{align}
		\label{eq:zyswitch}
		y \coloneqq Z
		,
		\qquad
		Y \coloneqq z
		.
	\end{align}
The aim is to prove that $Y = \volpot(y)$, which will finish the proof.

We now follow the definition \eqref{eq:volpot}.
From \eqref{eq:volpot1} we obtain
\begin{align}
	\partial_2(\Pot\act\pfl)(\ZZ3,\zz2,\zz3) = \zz1
\end{align}
which gives
\begin{align}
	\partial_2\Pot(\zz3,\zz2,\ZZ3) = \zz1
\end{align}
which, using \eqref{eq:zyswitch} gives
\begin{align}
	\partial_2\Pot(\YY3,\YY2,\yy3) = \YY1
\end{align}
which is exactly \eqref{eq:volpot3}.

The same computation shows that \eqref{eq:volpot3} transforms into \eqref{eq:volpot1}.

As to \eqref{eq:volpot2}, we have
\begin{align}
	\partial_3(\pot\act\pfl)(\ZZ3,\ZZ2,\zz3) \sig \partial_1(\Pot\act\pfl)(\ZZ3,\zz2,\zz3) = 0
\end{align}
which, gives
\begin{align}
	\partial_1\pot(\zz3,\ZZ2,\ZZ3)  \sig \partial_3\Pot(\zz3,\zz2,\ZZ3) = 0
\end{align}
and using \eqref{eq:zyswitch}:
\begin{align}
	\partial_1\pot(\YY3,\yy2,\yy3)  \sig \partial_3\Pot(\YY3,\YY2,\yy3) = 0
\end{align}
which is exactly \eqref{eq:volpot2}.

We conclude that $Y = \volpot(y)$.
\end{proof}

\begin{proposition}
	\label{prop:permactinv}
	Given any invertible map $\volpota$, the map $\permact{\Perm,\perm}{\volpota\inv}$ is the inverse of $\permact{\perm,\Perm}{\volpota}$.
\end{proposition}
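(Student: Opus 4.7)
The plan is to reduce everything to unfolding the definition of the action $\permact{\perm,\Perm}{\volpota}$ and to exploit that $(x,\arperm)\mapsto x\act\arperm$ is a right action of the symmetric group on $\RR^3$, i.e.\ $(x\act\arperm)\act\arperm' = x\act(\arperm\arperm')$ and $x\act\mathrm{id} = x$. The only property of $\volpota$ used is that $\volpota\inv\circ\volpota = \mathrm{id} = \volpota\circ\volpota\inv$; no hypothesis on the specific structure of $\volpota$ from \eqref{eq:volpot} is needed.

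Writing $F \coloneqq \permact{\perm,\Perm}{\volpota}$ and $G \coloneqq \permact{\Perm,\perm}{\volpota\inv}$, I would compute $G\circ F$ on an arbitrary $x\in\RR^3$ in a single chain of substitutions. Applying $F$ produces $\volpota(x\act\perm)\act\Perm\inv$; feeding this into $G$ first right-multiplies by $\Perm$, which cancels the trailing $\Perm\inv$; then applies $\volpota\inv$, which cancels $\volpota$; and finally right-multiplies by $\perm\inv$, which cancels the initial $\perm$ and returns $x$. The opposite composition $F\circ G$ is dispatched by the identical calculation with the roles of $(\perm,\Perm,\volpota)$ and $(\Perm,\perm,\volpota\inv)$ exchanged, so no separate argument is required.

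I do not anticipate any real obstacle: the whole argument is one line of bookkeeping, and the only thing to keep straight is the position (inner/outer, left/right) of the permutations, which is exactly why the outer permutation in the statement of the proposition is $\perm$ rather than $\Perm$. A more structural way to package the same computation is to introduce $P_\arperm(x) \coloneqq x\act\arperm$, so that $\permact{\perm,\Perm}{f} = P_{\Perm\inv}\circ f\circ P_\perm$; then $G\circ F = P_{\perm\inv}\circ \volpota\inv\circ P_\Perm\circ P_{\Perm\inv}\circ \volpota\circ P_\perm$ and all cancellations are manifest. This is in the same spirit as \autoref{prop:permactalg} but on the two sides of a composition rather than on a single factor, and could alternatively be derived from the one-sided formula $\permact{\arperm,\arperm'}{\paren[\big]{\permact{\perm,\Perm}{\volpota}}} = \permact{\arperm\perm,\arperm'\Perm}{\volpota}$ established within its proof.
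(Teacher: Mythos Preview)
Your proof is correct and follows essentially the same approach as the paper: both unfold the definition $\permact{\perm,\Perm}{\volpota}(x) = \volpota(x\act\perm)\act\Perm\inv$ and perform the obvious cancellations, the paper by solving for $x$ in terms of $X$ and you by composing $G\circ F$ directly. Your additional remark recasting the action as $P_{\Perm\inv}\circ\volpota\circ P_\perm$ is a nice way to make the cancellations transparent.
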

\begin{proof}
	Let us define $X$ and $x$ by:
	\begin{align}
		X = \paren[\big]{\permact{\perm,\Perm}{\volpota}}(x) = \volpota(x\act\perm)\act\Perm\inv
	\end{align}
	We then have
	\(
	X\act\Perm = \volpota(x\act\perm)
	\),
	so
	\(
	x\act\perm = \volpota\inv(X\act\Perm)
	\),
	and
	\begin{align}
		x = \volpota\inv(X\act\Perm)\act\perm\inv = \paren[\big]{\permact{\Perm,\perm}{\volpota\inv}}(X)
	,
	\end{align}
	which concludes the proof.
\end{proof}

\begin{proposition}
	\label{prop:permactvolpotinv}
	The map $\permact{\Perm ,\perm}{\volpot[\Pot\act\pfl,\pot\act\pfl]}$ is the inverse of $\permact{\perm,\Perm}{\volpot[\pot,\Pot]}$.
\end{proposition}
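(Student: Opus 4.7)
The plan is to observe that this proposition is an immediate corollary of the two preceding propositions, so no fresh calculation is needed. I would write essentially a one-line proof that chains them together.

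First, I would set $\volpota \coloneqq \volpot[\pot,\Pot]$ and apply \autoref{prop:permactinv} to this specific choice of $\volpota$. That proposition gives immediately that the inverse of $\permact{\perm,\Perm}{\volpot[\pot,\Pot]}$ is $\permact{\Perm,\perm}{\volpot[\pot,\Pot]\inv}$.

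Next, I would invoke \autoref{prop:volpotinv}, which identifies $\volpot[\pot,\Pot]\inv$ with $\volpot[\Pot\act\pfl,\pot\act\pfl]$. Substituting this identification into the expression from the previous step yields $\permact{\Perm,\perm}{\volpot[\Pot\act\pfl,\pot\act\pfl]}$, which is exactly the claimed inverse.

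There is essentially no obstacle here — the only thing one needs to be careful about is that \autoref{prop:permactinv} requires $\volpota$ to be invertible, but this is precisely what \autoref{prop:volpotinv} supplies (it not only gives a formula for the inverse but shows that $\volpot[\pot,\Pot]$ is indeed invertible with that explicit inverse, under the standing twist assumptions $\pd[\Pot]{32}\neq 0$ and $\pd[\pot]{21}\neq 0$). So the proof is simply the composition of the two lemmas, and can be stated in two sentences.
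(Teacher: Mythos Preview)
Your proof is correct and follows exactly the paper's own approach: the paper's proof is the single line ``Immediate consequence of \autoref{prop:volpotinv} and \autoref{prop:permactinv}.'' Your write-up just makes explicit the order in which the two propositions are applied and notes the invertibility hypothesis, which is fine.
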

\begin{proof}
	Immediate consequence of \autoref{prop:volpotinv} and \autoref{prop:permactinv}.
\end{proof}

We thus obtain the following
\begin{proposition}
	\label{prop:switchperm}
	If $(\perm,\Perm)$ has a solution $\pot(\vf),\Pot(\vf)$ (\autoref{def:conssol}), then $(\Perm ,\perm)$ has the solution given by the potentials
	\begin{align}
		\widetilde\pot(\vf) \coloneqq \Pot(-\vf)\act\pfl 
		\qquad
		\widetilde\Pot(\vf) \coloneqq \pot(-\vf)\act\pfl
		.
	\end{align}
\end{proposition}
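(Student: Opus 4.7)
The plan is to combine the inversion formula of \autoref{prop:permactvolpotinv} with the elementary observation that inverting a first-order consistent one-step method reverses the sign of its underlying vector field.

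First, I would apply the hypothesis to $-\vf$ instead of $\vf$. Since $\vf$ is divergence-free if and only if $-\vf$ is, \autoref{def:conssol} tells us that the map
\[
\Phi_h \coloneqq \permact{\perm,\Perm}{\volpot[\pot(-h\vf),\Pot(-h\vf)]}
\]
is consistent with $-\vf$, i.e.\ $\Phi_h(\mathbf{x}) = \mathbf{x} - h\vf(\mathbf{x}) + o(h)$. Second, I would invoke \autoref{prop:permactvolpotinv} with the formal substitutions $\pot \leftarrow \pot(-h\vf)$ and $\Pot \leftarrow \Pot(-h\vf)$ to identify
\[
\Phi_h\inv = \permact{\Perm,\perm}{\volpot[\Pot(-h\vf)\act\pfl,\pot(-h\vf)\act\pfl]} = \permact{\Perm,\perm}{\volpot[\widetilde\pot(h\vf),\widetilde\Pot(h\vf)]}
,
\]
where $\widetilde\pot,\widetilde\Pot$ are exactly the potentials defined in the statement.

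Third, I would invoke the elementary fact that if a smooth one-parameter family of diffeomorphisms $\Psi_h$ with $\Psi_0 = \mathrm{id}$ admits the expansion $\Psi_h(\mathbf{x}) = \mathbf{x} + h\mathbf{w}(\mathbf{x}) + o(h)$, then its inverse admits the expansion $\Psi_h\inv(\mathbf{x}) = \mathbf{x} - h\mathbf{w}(\mathbf{x}) + o(h)$, a direct consequence of the implicit function theorem applied near $h = 0$. Applied to $\Psi_h = \Phi_h$ and $\mathbf{w} = -\vf$, this yields that $\Phi_h\inv$ is consistent with $\vf$. Combined with the identification above, this is precisely the consistency condition of \autoref{def:conssol} for the pair $(\Perm,\perm)$ and the potentials $\widetilde\pot,\widetilde\Pot$.

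The only mildly delicate point is the third step: one must check that $\Phi_h\inv$ is well-defined and smooth in $h$ in a neighbourhood of $h = 0$, so that the first-order expansion may indeed be reversed termwise. Once that is granted, the remainder is a direct bookkeeping exercise: the swap $(\perm,\Perm) \to (\Perm,\perm)$ together with the transformation $(\pot,\Pot) \to (\Pot\act\pfl,\pot\act\pfl)$ and the sign change in $\vf$ correspond exactly to what inversion of a first-order consistent method produces, which is the standard notion of \emph{adjoint method} in the numerical integration literature.
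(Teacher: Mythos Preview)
Your proof is correct and follows essentially the same route as the paper's: both invoke \autoref{prop:permactvolpotinv} to identify $\permact{\Perm,\perm}{\volpot[\widetilde\pot(h\vf),\widetilde\Pot(h\vf)]}$ as the inverse of $\permact{\perm,\Perm}{\volpot[\pot(-h\vf),\Pot(-h\vf)]}$, note that the latter is consistent with $-\vf$ by hypothesis, and conclude that its inverse is consistent with $\vf$. Your version is in fact more explicit than the paper's, which leaves the last step (that inversion of a first-order method flips the sign of the vector field) entirely tacit.
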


\begin{proof}
	By \autoref{prop:permactvolpotinv}, $\permact{\Perm,\perm}{\volpot[\widetilde{\pot}(h\vf),\widetilde{\Pot}(h\vf)]}$ is the inverse of $\permact{\perm,\Perm}{\volpot[\pot(-h\vf),\Pot(-h\vf)]}$, which by assumption is consistent with $-\vf$.
\end{proof}

\subsection{Main result}
We are now ready to introduce the main result of this section.

\begin{theorem}
	\label{prop:fivecases}
	If one has a solution $(1,\tau)$ (\autoref{def:conssol}), where $\tau$ is one of the \emph{five} permutations consisting of the identity, the three mirror symmetries and one rotation (see \autoref{fig:triangle}), then one has a solution for all permutations $(\perm,\Perm)$.
\end{theorem}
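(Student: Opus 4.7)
My plan is to combine the two equivalence relations already established --- relabeling (\autoref{prop:trivperm}) and adjunction (\autoref{prop:switchperm}) --- into a group-theoretic enumeration of the $36$ pairs $(\perm,\Perm)\in S_3\times S_3$.

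First, applying \autoref{prop:trivperm} with $\arperm = \perm\inv$ transports a solution for $(\perm,\Perm)$ into one for $(1,\perm\inv\Perm)$, and the converse choice $\arperm = \perm$ goes back. Hence every pair is equivalent, by relabeling alone, to a unique pair of the form $(1,\tau)$ with $\tau\coloneqq\perm\inv\Perm$, reducing the $36$ cases to the six cases $\tau\in S_3$. Next, \autoref{prop:switchperm} sends a solution for $(1,\tau)$ to one for $(\tau,1)$, and a further relabeling by $\arperm = \tau\inv$ brings it to $(1,\tau\inv)$. Therefore $(1,\tau)$ and $(1,\tau\inv)$ are equivalent, and it suffices to pick one representative from each orbit $\set{\tau,\tau\inv}\subset S_3$.

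Finally I would enumerate these orbits: the identity $e$ is its own inverse; each of the three transpositions $(1\,2),(1\,3),(2\,3)$ is its own inverse; and the two $3$-cycles $(1\,2\,3),(1\,3\,2)$ are mutual inverses and thus form a single orbit. Identifying the three transpositions with the mirror symmetries and either $3$-cycle with a rotation of the equilateral triangle in \autoref{fig:triangle} yields exactly the five representatives named in the statement, which concludes the argument. The only point requiring real care --- and the one genuine piece of bookkeeping --- is to verify that the left-multiplication structure built into \autoref{prop:permactalg} really does make $\perm\inv\Perm$ the invariant of the relabeling action and that the composition ``adjunction followed by relabeling by $\tau\inv$'' sends $(1,\tau)$ to $(1,\tau\inv)$ rather than, say, back to $(1,\tau)$; this is precisely what forces the two three-cycles (and not, for instance, the identity together with a transposition) to be the pair that collapses into a single class.
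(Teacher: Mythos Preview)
Your proposal is correct and follows essentially the same approach as the paper: reduce $(\perm,\Perm)$ to $(1,\perm\inv\Perm)$ via \autoref{prop:trivperm}, then identify $(1,\tau)$ with $(1,\tau\inv)$ by composing \autoref{prop:switchperm} with a further relabeling, and finally observe that the resulting orbits in $S_3$ are the identity, the three transpositions, and the single pair of $3$-cycles. The paper presents the same two steps in the opposite order but with identical content.
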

\begin{proof}
	First, suppose that $\tau$ is one of the permutations for which we have a solution, i.e., we have a solution for $(1,\tau)$.
	Then by \autoref{prop:switchperm}, we obtain a solution for $(\tau,1)$, and using \autoref{prop:trivperm}, we obtain a solution for $(1,\tau\inv)$.
	As a result, we obtain a solution for all the six cases of the form $(1,\tau)$, for any permutation $\tau$.
	Suppose now that we have a pair of permutations $(\perm,\Perm)$.
	By \autoref{prop:trivperm}, as we have a solution for $(1,\perm\inv\Perm)$, we have a solution for $(\perm,\Perm)$.
\end{proof}

In order to write the equations for the map $X = \permact{\perm,\Perm}{\volpot}(x)$ we will use the following notations, for fixed permutations $\perm$ and $\Perm$.
We write
\begin{subequations}
	\label{eq:notpzm}
\begin{align}
\xp = \yy{\perm\inv(1)},\xz = \yy{\perm\inv(2)},\xm = \yy{\perm\inv(3)}
\end{align}
Similarly, we write
\begin{align}
	\Xp = \YY{\Perm\inv(1)}, \Xz = \YY{\Perm\inv(2)}, \Xm = \YY{\Perm\inv(3)}
\end{align}
\end{subequations}

The general equations for $X = \permact{\perm,\Perm}{\volpot}(x)$ are written as
\begin{subequations}
	\label{eq:-o+general}
	\begin{align}
		\partial_{\xz} \pot(\Xm, \xz, \xm) &=\xp  \\
		\partial_{\xm} \Pot(\Xm,\Xz,\xm) & \sig \partial_{\Xm} \pot(\Xm,\xz, \xm) = 0 \\
		\Xp &= \partial_{\Xz} \Pot(\Xm, \Xz, \xm)
\end{align}
\end{subequations}


The meaning of \autoref{prop:fivecases} is the following.
By \autoref{prop:trivperm}, the types of equation are classified by the permutation $\tau \coloneqq \perm\inv \circ \Perm$.
Moreover, by \autoref{prop:switchperm}, if we have a solution for the permutation $\tau$, we have a solution for the permutation $\tau\inv$.
Now, the permutation group, depicted on \autoref{fig:triangle}, consists of the identity, three mirror symmetries, and two rotations.
The identity and the mirror symmetries are their own inverse, so \autoref{prop:switchperm} is trivial in those case.
However, we see that both rotations are the inverse of one another,
and it thus suffices to solve the problem corresponding to one
rotation.

%
%

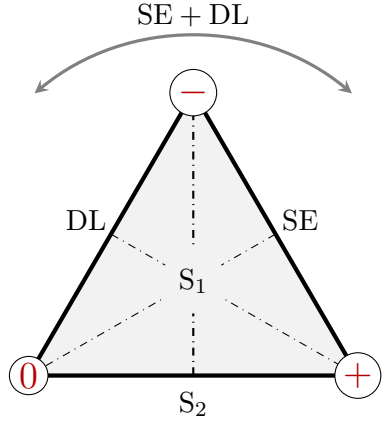
\begin{figure}
	\centering
\begin{tikzpicture}[
	pzm/.style={font=\Large\bfseries, inner sep=1pt, color=red!70!black,  shape=circle, fill=white, draw=black },
	mirror/.style={dashdotted},
	rotation/.style={>=stealth, ->, very thick, gray},
	birotation/.style={rotation, <->,},
scale=2.5]
 
\begin{scope}[rotate=210]
    \coordinate (Z) at (0:1);
    \coordinate  (P) at (120:1);
    \coordinate (M) at (-120:1);
\end{scope}

    \draw [ultra thick, fill=gray!10] (P) -- (M) -- (Z) -- cycle;

	\coordinate (z) at ($ (P)!.5!(M) $);
	\coordinate (m) at ($ (P)!.5!(Z) $);
	\coordinate (p) at ($ (M)!.5!(Z) $);

	\newcommand*\mirrorsep{1.1}
	\node at ($(Z)!\mirrorsep!(z)$) {$\mathrm{SE}$};
	\node at ($(M)!\mirrorsep!(m)$) {$\mathrm{S}_2$};
	\node at ($(P)!\mirrorsep!(p)$) {$\mathrm{DL}$};

	\draw[mirror] (P) -- (p);
	\draw[mirror, thick] (M) -- (m);
	\draw[mirror] (Z) -- (z);

	\node[pzm,] at (P) {$+$};
	\node[pzm,] at (M) {$-$};
	\node[pzm,] at (Z) {$0$};

	\node[shape=circle,  text opacity=1, fill=gray!10] at (0,0) {$\mathrm{S}_1$};



	\newcommand*\rotdist{1.3}
	\newcommand*\rotangle{40}
 
	\begin{scope}[rotate=90]
	\draw[birotation] (-\rotangle:\rotdist) arc[start angle=-\rotangle, end angle=\rotangle, radius=\rotdist];
	\node[above] at(0:\rotdist) {$\mathrm{SE+DL}$};
	\end{scope}
 
\end{tikzpicture}
\caption{A picture of the permutation group of three elements.
It consists of the identity $\mathrm{S}_1$, the mirror symmetries called $\mathrm{S}_2$, $\mathrm{DL}$, and $\mathrm{SE}$. 
The name of the two last mirror symmetries is explained in \autoref{sec:SE} and \autoref{type4}, respectively.
Finally, the left and right rotations, which reduce to only one problem, are denoted here by $\mathrm{SE+DL}$, for reasons explained in \autoref{t2}.}
\label{fig:triangle}
\end{figure}

\section{A classification and description of the five generating volume forms in $\RR^3$}
Having proved that there are only five classes of generating volume
forms in $\RR^3$ (up to relabeling and adjunction) for a given vector
field $\vf$ in $\RR^3$, we proceed with a classification.

\subsection{Class SE+SE}
\label{sec:SE}

We commence with case SE+SE. Its generating differential form was
already discussed and extended to the $n$-dimensional case in
\cite{lomeli09gff}. This case was also discussed in detail in
\cite{xue2014gf}, where it was associated to numerical integrators
consisting of combinations of Symplectic Euler methods. 

With the notation of this paper, we have $\Sigma(+)=1, \Sigma(\circ)=2,\Sigma(-)=3$ and  $\sigma(-)=1,\sigma(\circ)=2,\sigma(+)=3$, see Figure~\ref{fig:triangle}. From $\tau=\Sigma^{-1}\circ\sigma$, we see that $\mathrm{sign}(\tau)=-1$, therefore (\ref{eq:-o+general}) becomes
 \begin{align}
  x_3&=\partial_{x_2}\phi(x_1,x_2,X_3),\\
 \partial_{X_3}\phi(x_1,x_2,X_3)&=\partial_{x_1}\Phi (x_1,X_2,X_3),\\
 X_1&=\partial_{X_2}\Phi (x_1,X_2,X_3),
 \label{eq:case5}
 \end{align}
with twist conditions
\begin{equation*}
\frac{\partial X_1}{\partial x_1}\neq 0,~~\frac{\partial x_3}{\partial X_3}\neq 0,
\end{equation*}
and generating form
\begin{displaymath}
	\lambda=\phi (x_1,x_2,X_3) dx_1+ \Phi (x_1,X_2,X_3)  dX_3.
\end{displaymath}
As $\lambda$ is combination of differentials $dx_1$ and $d X_3$, it is natural to consider vector fields generated by \eqref{eq:vf1} and \eqref{eq:vf3}.
Setting $\Phi =x_1X_2+\Delta t F^{3}(x_1, X_2,X_3)$ and $\phi=x_2X_3+\Delta tF^{1}(x_1,x_2,X_3)$, \eqref{eq:case5} leads to the first order volume preserving scheme 
\begin{align}
X_1&=x_1+\Delta t \partial_{X_2}F^{3}(x_1,X_2,X_3),\\
X_2&=x_2-\Delta t \partial_{x_1}F^{3}(x_1,X_2,X_3)+\Delta t\partial_{X_3}F^{1}(x_1,x_2,X_3),\\
X_3&=x_3-\Delta t \partial_{x_2}F^{1}(x_1,x_2,X_3),
\end{align}
which is equivalent to two steps of the Symplectic Euler (SE) method to solve each of the two 2D Hamiltonian systems \eqref{eq:vf3} and \eqref{eq:vf1}.

\subsection{Class DL+SE}
\label{t2}

We have $\Sigma(+)=1, \Sigma(-)=2,\Sigma(\circ)=3$, and, as for all cases under consideration in this paper,  $\sigma(-)=1,\sigma(\circ)=2,\sigma(+)=3$. We have $\mathrm{sign}(\tau)=1$ and (\ref{eq:-o+general}) becomes
 \begin{align}
	 x_3&=\partial_{x_2}\phi(x_1,x_2,X_2),\\
  	\partial_{X_2}\phi(x_1,x_2,X_2)&=\partial_{x_1}\Phi (x_1,X_2,X_3),\\
 	X_1&=-\partial_{X_3}\Phi (x_1,X_2,X_3),
 \label{eq:case4}
 \end{align}
with twist conditions
\begin{equation*}
	\frac{\partial x_1}{\partial X_1}\neq 0,~~\frac{\partial X_2}{\partial x_3}\neq 0,
\end{equation*}
and generating form
\begin{displaymath}
	\lambda=\phi(x_1,x_2,X_2) dx_1+  \Phi (x_1,X_2,X_3) dX_2.
\end{displaymath}
As $\lambda$ is combination of differentials $dx_1$ and $d X_2$, it is natural to consider vector fields generated by \eqref{eq:vf1} and \eqref{eq:vf2}. 

Note that $\phi$ is a function of both $x_2$ and $X_2$ and that $x_3$ is determined by $x_2, X_2$. 
This points to an interpretation of the Hamiltonian system defined by $H(x_2,x_3)=F^1(x_1, x_2, x_3) $, where $x_2 \equiv q$, $x_3\equiv p$, and $x_1$ is treated as a constant,
\begin{align}
 \dot{x}_1&=0,\\
 \dot{x}_2&=\partial_{x_3}F^{1}(x_1,x_2,x_3),\\
 \dot{x}_3&=-\partial_{x_2}F^{1}(x_1,x_2,x_3).
 \end{align}
 by a \emph{Lagrangian} formulation, with Lagrangian function
\begin{displaymath}
 	L^1(x_1, x_2, \dot x_2) = x_3 \dot x_2 - H = x_3 \dot x_2 - F^1(x_1, x_2,x_3)
\end{displaymath}
\cite{goldstein01cm}.
Consider a \emph{discrete Lagrangian} $ L^1_d= \Delta t L^1(x_1, x_2, (X_2-x_2)/\Delta t)$ \cite{marsden01}. 
With the choice
\begin{displaymath}
	\phi(x_1, x_2, X_2) =L_d^1 (x_1, x_2, X_2),
\end{displaymath}
we see that the first equation of \eqref{eq:case4} is satisfied, and, moreover, an intermediate variable for $x_3$ is obtained,
\begin{displaymath}
	\tilde x_3 = - \partial_{X_2} \phi (x_1, x_2, X_2).
\end{displaymath}

For the $\Phi$ function, choose
\begin{displaymath}
  	\Phi(x_1, X_2, X_3) = -x_1X_3 + \Delta t F^2(x_1, X_2, X_3).
\end{displaymath}
The third equation of \eqref{eq:case4} gives $X_1 = x_1 - \Delta t \partial_{X_3} F^2(x_1,X_2,X_3)$ and $ \Phi_{x_1} = - X_3 + \Delta t \partial_{x_1}F^2(x_1,X_2,X_3)$.

Altogether, we obtain
\begin{align}
	x_3 &= \partial_{x_2}  L_d^1 (x_1, x_2, X_2) \\
	\tilde x_3 &= - \partial_{X_2}  L_d^1 (x_1, x_2, X_2) \qquad (= - \partial_{X_2} \phi) \\
	X_3 &= \tilde x_3 + \Delta t \partial_{x_1}F^2(x_1,X_2,X_3)\qquad ( \hbox{compat.\ cond.\ } \partial_{X_2} \phi = \partial_{x_1} \Phi) \\
	X_1 &= x_1 - \Delta t \partial_{X_3} F^2(x_1,X_2,X_3)
\end{align}
which is a combination of a Discrete Lagrangian method (DL) for \eqref{eq:vf1} and a Symplectic Euler (SE) for \eqref{eq:vf2}. As long as the discrete Lagrangian function $L_d^1$ is a consistent approximation to the continuous one, the composed method has at least order one.

\subsection{Class DL+DL}
\label{type4}

We have $\Sigma(-) = 2$, $\Sigma(\circ) = 1$, $\Sigma(+) = 3$  and $\mathrm{sign}(\tau) = -1$ and (\ref{eq:-o+general}) becomes
\begin{align}
	x_3 &= \partial_{x_2} \phi(x_1, x_2, X_2), \\
	\partial_{X_2} \phi(x_1, x_2, X_2) &= \partial_{x_1} \Phi(x_1, X_2, X_1), \\
	X_3 &= \partial_{X_1} \Phi(x_1, X_2, X_1),
\label{eq:case3}
\end{align}
with twist conditions
\begin{displaymath}
	\frac{\partial x_1}{\partial X_3} \not=0, \quad \frac{\partial X_2}{\partial x_3} \not=0,
\end{displaymath}
and generating form 
\begin{displaymath}
	\lambda=\phi(x_1,x_2,X_2)  dx_1+\Phi (x_1,X_1,X_2) dX_2.
\end{displaymath}
The generating form indicates that one should look for vector fields of the form \eqref{eq:vf1} and \eqref{eq:vf2}. 

Similarly to the procedure described above, we choose $\phi = L_d^1$, generating the intermediate approximation $(x_1, X_2,\tilde x_3)$ (recall that $x_1$ is kept constant).
Also the system \eqref{eq:vf2} is interpreted as a Lagrangian system, with Lagrangian function $L^2(x_1, \dot x_1, X_2)=  \tilde x_3 \dot x_1 - F^2(x_1,X_2, \tilde x_3)$. Now, $X_2$ is kept constant. We set 
\begin{displaymath} 
	\Phi(x_1, X_1, X_2) = -L_d^2 (x_1,X_1,X_2), 
\end{displaymath}	
where  $L_d^2 (x_1,X_1,X_2)= \Delta t L^2(x_1, (X_1-x_1)/\Delta t, X_2)$ is a discrete Lagrangian approximation to $L^2$.

Altogether, we obtain
\begin{align}
	x_3 &= \partial_{x_2}  L_d^1 (x_1, x_2, X_2), \\
	\tilde x_3 &= - \partial_{X_2}  L_d^1 (x_1, x_2, X_2)  \qquad (= - \partial_{X_2} \phi) \\
	\tilde x_3 &=   \partial_{x_1} L_d^2 (x_1, X_1, X_2)  \qquad (\hbox{compat.\ cond.\ } \partial_{X_2} \phi = \partial_{x_1} \Phi)\\
	X_3 &=  -\partial_{X_1} L_d^2(x_1,X_1,X_2),
\end{align}
which is a combination of a Discrete Lagrangian methods (DL) for \eqref{eq:vf1} and for \eqref{eq:vf2}. As long as the discrete Lagrangian functions $L_d^1, L_d^2$ are consistent approximations to the continuous ones, the composed method has at least order one.

%

\subsection{Special classes: $S_1$ and $S_2$}
\label{type5}
While all the cases discussed above can be interpreted as splitting in two two-dimensional Hamiltonian systems, either solved by a symplectic method or turned into Lagrangian systems solved by a discrete Lagrangian method,
there is no obvious mechanical interpretation for cases $S_1$ and $S_2$. We are not aware of any numerical method for ordinary differential equations that is naturally related to these two generating forms in the same way as all the other cases discussed in this paper. In this respect, cases $S_1$ and case $S_2$ are novel cases.

Cases $S_1$ and $S_2$  are both associated to generating forms of type
\begin{displaymath}
	\lambda = \phi d x_1 + \Phi d X_1,
\end{displaymath}
which would suggest the choice of two vector fields of the form \eqref{eq:vf1}, clearly a degenerate and not particularly interesting vector field. 
Are there non-degenerate vector fields for which such generating form
gives consistent, non trivial maps? The answer is yes: 
it is possible to give explicit expressions for $\phi, \Phi$ so that the generating form $\lambda = \phi d x_1 + \Phi d X_1$ of cases $S_1$ and $S_2$ gives consistent, first order, volume preserving numerical methods at least in the case when the vector field $\mathbf{a}$ in \eqref{eq:vf} is linear: for \emph{linear vector fields}, $\phi, \Phi$ can be taken to be quadratic functions; thus the determining conditions in \eqref{eq:-o+general} are linear in the unknown variables.

Hereafter, we restrict our attention to linear divergence-free vector fields
\begin{align}
\label{linear}
\dot{x}_1&=a_1(x_1,x_2,x_3)=a_{11}x_1+a_{12}x_2+a_{13}x_3,\\
\dot{x}_2&=a_2(x_1,x_2,x_3)=a_{21}x_1+a_{22}x_2+a_{23}x_3, \qquad a_{11}+a_{22}+a_{33}=0. \\
\dot{x}_3&=a_3(x_1,x_2,x_3)=a_{31}x_1+a_{32}x_2+a_{33}x_3,
\end{align}

\subsubsection{Class $S_1$}
We have that $\Sigma(-)=1, \Sigma(\circ)=2,\Sigma(+)=3$, and $\tau$ is an even permutation. 
The generating one-form is 
\begin{displaymath}
	\lambda=\phi(x_1, x_2, X_1) dx_1+ \Phi (x_1, X_1, X_2) d X_1, 
\end{displaymath}
where $\phi, \Phi $ satisfy
\begin{align}
x_3&=\partial_{x_2}\phi(x_1, x_2, X_1),\\
\partial_{X_1} \phi(x_1, x_2, X_1) &= \partial_{x_1} \Phi (x_1,X_1,X_2),\\
X_3& = -\partial_{X_2} \Phi (x_1,X_1,X_2).
\label{eq:S1}
\end{align}
In addition to assuming linearity of the vector field, we also assume $a_{13}\neq0$ (a twist condition, implying that $x_3$ can be determined from $X_1$, given $x_1, x_2$).

Two possible choices of $(\phi, \Phi)$ yielding first order numerical methods of linear vector fields are given below.

\begin{proposition}
\label{th:S1}
Consider the  class $S_1$ generating one-form $\lambda = \phi dx_1 + \Phi dX_1$. Let
\begin{align}
	\phi(x_1,X_1,x_2)&=\frac{X_1-x_1-\Delta ta_{11}x_1-\Delta t ^2a_{21}a_{12}x_1\frac{k_2}{k_1}}{\Delta t a_{13}+\Delta t^2a_{23}a_{12}\frac{k_2}{k_1}}x_2-\frac{a_{12}}{k_1a_{13}+\Delta ta_{23}a_{12}k_2}\frac{x_2^2}{2},\\
	\Phi (x_1,X_1,X_2)&=-\frac{X_1-x_1-\Delta ta_{11}x_1}{\Delta ta_{13}}(1+\Delta ta_{33})X_2+X_2^2\big(-	\frac{\Delta ta_{32}}{2}+\frac{a_{12}}{2a_{13}}(1+\Delta ta_{33})\big)\\
&-\Delta t a_{31}X_1X_2-\frac{2X_1x_1-x_1^2(1+\Delta ta_{11})\Delta ta_{23}k_2+\Delta t^2a_{13}a_{21}k_2x_1^2}{2k_3},
\label{eq:S1_quispel}
\end{align}
where $k_1=1+\Delta t^2a_{11}a_{33}-\Delta ta_{22}$, $k_2=1+\Delta t^2\frac{a_{11}a_{33}}{1-\Delta ta_{22}}$, $k_3=\Delta ta_{13}(\Delta ta_{13}+\frac{k_2}{k_1}\Delta ta_{12}a_{23})$, and 
\begin{align}
\phi(x_1,X_1,x_2)&=\frac{X_1-(1+\Delta ta_{11})x_1}{\Delta a_{13}}x_2-\frac{1}{2}\frac{a_{12}}{a_{13}}x_2^2,\\
\Phi (x_1,X_1,X_2)&=-\frac{1}{l_1}\big((1+\Delta ta_{33})(\frac{X_1-(1+\Delta ta_{11})x_1}{\Delta ta_{33}}X_2-\frac{1}{2}\frac{a_{12}}{a_{13}}X_2^2)+\Delta ta_{31}x_1X_2\\
&+\frac{1}{2}\Delta ta_{32}X_2^2+\Delta t\frac{a_{12}}{a_{13}}(a_{21}x_1X_1+\frac{1}{2}a_{22}X_2^2)\big)\\
&+\Delta ta_{21}x_1^2/2+\Delta ta_{23}\frac{2X_1x_1-x_1^2(1+\Delta ta_{11})}{2\Delta ta_{13}},
\label{eq:S1_AZ}
\end{align} 
where  $l_1=1-\Delta t\frac{a_{12}}{a_{13}}a_{23}$.

Both choices \eqref{eq:S1_quispel} and \eqref{eq:S1_AZ} yield first-order volume preserving integrators for the vector field (\ref{linear}), provided that $a_{13}\not=0$.
\end{proposition}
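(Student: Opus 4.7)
The plan is to verify each claim by direct computation, with the observation that volume preservation is automatic: any map implicitly defined by the three equations \eqref{eq:S1} is volume preserving, since this is precisely how the generating one-form $\lambda=\phi\,dx_1+\Phi\,dX_1$ was constructed in Section~\ref{volsec}. So the only thing left to check is \emph{consistency}, namely that as $\Delta t \to 0$ the implicit map $(x_1,x_2,x_3)\mapsto(X_1,X_2,X_3)$ determined by \eqref{eq:S1} satisfies $X_i = x_i + \Delta t\, a_i(x) + O(\Delta t^2)$ for the linear vector field \eqref{linear}.

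Because $\phi$ and $\Phi$ in either \eqref{eq:S1_quispel} or \eqref{eq:S1_AZ} are quadratic polynomials with $\Delta t$-dependent coefficients, each of the three determining equations is \emph{linear} in the unknown it resolves, and the whole system can be solved triangularly. First, I compute $\partial_{x_2}\phi$; since it is an affine function of $x_2$ and $X_1$, the first equation of \eqref{eq:S1} becomes a linear equation for $X_1$ in terms of $(x_1,x_2,x_3)$, invertible precisely because the twist condition $a_{13}\neq 0$ holds. Next I compute $\partial_{X_1}\phi$ and $\partial_{x_1}\Phi$; equating them via the compatibility condition gives a linear equation that can be solved for $X_2$. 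Finally, the third equation $X_3=-\partial_{X_2}\Phi$ yields $X_3$ explicitly once $X_1, X_2$ are known. Each of $X_1, X_2, X_3$ is then Taylor-expanded in $\Delta t$ and the leading coefficient of $X_i-x_i$ is matched against $a_i(x)$ from \eqref{linear}.

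The auxiliary quantities $k_1,k_2,k_3$ in \eqref{eq:S1_quispel} and $l_1$ in \eqref{eq:S1_AZ} are exactly the denominators produced by this inversion; they are designed so that $k_1,k_2\to 1$, $k_3\sim (\Delta t\,a_{13})^2$ and $l_1\to 1$ as $\Delta t\to 0$, so they do not spoil the leading-order expansion. The two formulas \eqref{eq:S1_quispel} and \eqref{eq:S1_AZ} differ only in how the $O(\Delta t^2)$ corrections are distributed between $\phi$ and $\Phi$, but both produce the same leading Taylor coefficients and hence the same first-order consistency. The main obstacle is therefore not conceptual but bookkeeping: rational functions of $\Delta t$ appear in intermediate expressions (notably in the step $X_2 \leftarrow $ compatibility), and their series expansion must be tracked carefully. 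A clean strategy is to set $\Delta t = 0$ first to verify $X_i = x_i$ at leading order, then differentiate once in $\Delta t$ and verify that the result coincides with $a_i$ evaluated at $(x_1,x_2,x_3)$; this separates the algebra into two small linear checks rather than one large symbolic expansion, and can be confirmed either by hand or by computer algebra.
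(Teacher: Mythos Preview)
Your proposal is correct but takes a genuinely different route from the paper's proof. The paper proceeds \emph{constructively}: for \eqref{eq:S1_quispel} it first builds a volume-preserving map via Quispel's correction method (choosing $f_{correct}$ so that the implicit scheme \eqref{t5vp} preserves volume and is first-order by design), then \emph{reverse-engineers} $\phi$ and $\Phi$ from that map by inverting for $x_3$ and integrating; the compatibility condition is used at the end to fix the remaining integration ``constant'' $\tilde{C}(x_1,X_1)$. For \eqref{eq:S1_AZ} the paper starts from a Forward Euler ansatz for $X_1$, integrates to obtain $\phi$, makes a similar ansatz for $X_3$ with an ad hoc consistency repair, integrates to obtain $\Phi$, and again uses compatibility to determine $\tilde{C}$.

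You instead take the formulas as given and \emph{verify} directly: observe that volume preservation is automatic from the generating-form framework, solve the three equations \eqref{eq:S1} triangularly (first equation $\Rightarrow X_1$, compatibility $\Rightarrow X_2$, third equation $\Rightarrow X_3$), and Taylor-expand in $\Delta t$. This is perfectly sound---the quadratic structure of $\phi,\Phi$ does make each step affine in its unknown, and your observation that $k_1,k_2,l_1\to 1$, $k_3\sim(\Delta t\,a_{13})^2$ controls the denominators correctly. The trade-off is that the paper's proof explains \emph{where} the formulas come from and how to produce analogues for other cases (e.g.\ Proposition~\ref{th:S2}), whereas your argument is shorter and more self-contained but treats the formulas as a black box. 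For the stated proposition, either approach suffices.
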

The proof of the result can be found in Appendix~\ref{app:S1}.

\subsubsection{Class $S_2$}
This case corresponds to $\Sigma(-)=1, \Sigma(\circ)=3,\Sigma(+)=2$, with $\tau$ an odd permutation. 
The generating one-form is 
\begin{displaymath}
	\lambda=\phi(x_1, x_2, X_1) dx_1+ \Phi (x_1, X_1, X_3) d X_1, 
\end{displaymath}
where $\phi, \Phi $ satisfy
\begin{align}
x_3&=\partial_{x_2}\phi(x_1, x_2, X_1),\\
\partial_{X_1} \phi(x_1, x_2, X_1) &= \partial_{x_1} \Phi (x_1,X_1,X_3),\\
X_2& = \partial_{X_3} \Phi (x_1,X_1,X_3).
\label{eq:S2}
\end{align}
As for the $S_1$ case, we also assume $a_{12}\neq0$. Below we give the explicit expression of a choice $(\phi, \Phi)$ yielding first order numerical methods of linear vector fields. 

\begin{proposition}
\label{th:S2}
Consider the  class $S_2$ generating one-form $\lambda = \phi dx_1 + \Phi dX_1$. Let\begin{align}
\label{eq:S2_quispel}
	\phi(x_1,X_1,x_2)&=\frac{\big(m_1(X_1-x_1-\Delta ta_{11}x_1)-\Delta t^2a_{31}a_{13}m_2 x_1\big)x_2}{\Delta ta_{13}}   - \frac{ m_1a_{12}x_2^2}{2a_{13}}-\frac{\Delta ta_{32}m_2 x_2^2}2   \\
	\Phi(x_1,X_1,X_3)&= \frac{(1+\Delta ta_{22})\big(X_1-(1+\Delta ta_{11})x_1\big)X_3}{\Delta ta_{12}}-\frac{a_{13}(1+\Delta ta_{22})X_3^2}{2a_{12}}\\
& \quad \mbox{} +\Delta ta_{21}X_1X_3+\frac{\Delta ta_{23}}2 X_3^2+ \frac{m_1(2X_1x_1-\big(1+\Delta ta_{11})x_1^2\big)}{2\Delta t^2 a_{12}a_{13}},
\end{align}
where $m_1=1-\Delta t a_{33}+\Delta t^2a_{11}a_{22}$ and $m_2=1+\Delta t^2a_{11}a_{22}/(1-\Delta ta_{33})$.
The choice \eqref{eq:S1_quispel}  yields first-order volume preserving integrators for the vector field (\ref{linear}), provided that $a_{12}\not=0$.
\end{proposition}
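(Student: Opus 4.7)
The strategy mirrors the one used for \autoref{th:S1} and splits into two independent tasks: verifying volume preservation, and verifying first-order consistency with the linear vector field \eqref{linear}. Volume preservation requires no additional work, since \eqref{eq:S2} is precisely the defining-plus-compatibility system associated with the $S_2$ generating one-form $\lambda = \phi\,dx_1 + \Phi\,dX_1$; by the L\'omeli--Meiss framework recalled in \autoref{volsec}, the resulting implicit map $(x_1,x_2,x_3)\mapsto(X_1,X_2,X_3)$ automatically preserves the standard volume form on $\RR^3$ as soon as the twist conditions are satisfied. The bulk of the proof therefore consists in checking consistency.

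First I would compute the four partial derivatives that enter \eqref{eq:S2} --- namely $\partial_{x_2}\phi$, $\partial_{X_1}\phi$, $\partial_{x_1}\Phi$ and $\partial_{X_3}\Phi$ --- directly from the quadratic expressions in the statement. Since $\phi$ and $\Phi$ are each quadratic in the arguments being solved for, these derivatives are affine, and \eqref{eq:S2} becomes a linear system in $(X_1,X_2,X_3)$ with coefficients depending on $(x_1,x_2,x_3)$, $\Delta t$, and the matrix $(a_{ij})$. I would then solve it sequentially: the first equation is linear in $X_1$ alone, with leading coefficient proportional to $m_1/(\Delta t\, a_{13})$, and hence uniquely invertible for small $\Delta t$; substituting the resulting affine expression for $X_1$ into the compatibility equation yields a linear equation for $X_3$ whose coefficient of $X_3$ is of order $1/(\Delta t\, a_{12})$, so that the assumption $a_{12}\neq 0$ guarantees solvability; finally $X_2$ is read off from the third equation of \eqref{eq:S2} by direct substitution.

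The substantive step will be to Taylor-expand each $X_i$ in $\Delta t$ and verify
\begin{displaymath}
\lim_{\Delta t\to 0}\frac{X_i-x_i}{\Delta t} = a_{i1}x_1+a_{i2}x_2+a_{i3}x_3 = a_i(\mathbf{x}), \qquad i=1,2,3.
\end{displaymath}
I expect the correction factors $m_1 = 1 - \Delta t\, a_{33} + \Delta t^2 a_{11}a_{22}$ and $m_2 = 1 + \Delta t^2 a_{11}a_{22}/(1-\Delta t\, a_{33})$, both of which tend to $1$ as $\Delta t\to 0$, to have been chosen precisely so that the spurious higher-order terms in the numerators of $\phi$ and $\Phi$ cancel against their counterparts in the denominators, leaving only the affine part $\Delta t\, a_i(\mathbf{x})$ at leading order. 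As a sanity check, the explicit solution of the first equation reads $X_1 = x_1 + \Delta t(a_{11}x_1 + a_{12}x_2) + (\Delta t\, a_{13}/m_1)\,x_3 + O(\Delta t^2)$, which collapses to $x_1 + \Delta t\, a_1(\mathbf{x}) + O(\Delta t^2)$ since $m_1 = 1 + O(\Delta t)$; the same pattern is expected for $X_2$ and $X_3$.

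The main obstacle will be bookkeeping rather than conceptual: the formulas for $\phi$ and $\Phi$ contain several nested fractions whose denominators mix $a_{12}$, $a_{13}$, $m_1$ and $\Delta t$, and the cancellations between these denominators and the $O(\Delta t^k)$ terms of the numerators must be tracked carefully across the three sequential substitutions. Because the system is linear in the unknowns, a symbolic verification is routine, and the detailed calculation parallels, mutatis mutandis, the one carried out for $S_1$ in Appendix~\ref{app:S1}.
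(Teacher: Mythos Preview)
Your proposal is correct, but it takes a genuinely different route from the paper's own argument. You proceed \emph{verificationally}: take the given $\phi,\Phi$, write down the partial derivatives entering \eqref{eq:S2}, solve the resulting linear system sequentially for $X_1,X_3,X_2$, and Taylor-expand to confirm consistency, while volume preservation is inherited from the generating-form framework. This is entirely valid; your sanity check for $X_1$ is correct, and the remaining two expansions are of the same nature.

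The paper, by contrast, proceeds \emph{constructively}: it first writes down a concrete volume-preserving first-order scheme using Quispel's correction method (\cite{GRW199526}), namely
\begin{align*}
X_1 &= x_1 + \Delta t\, a_1(x_1,x_2,X_3),\\
X_2 &= x_2 + \Delta t\, a_2(X_1,x_2,X_3),\\
X_3 &= x_3 + \Delta t\, a_3(x_1,x_2,X_3) - f_{\mathrm{correct}}(x_1,x_2,X_3),
\end{align*}
with the correction term fixed so that volume is preserved, and then \emph{derives} $\phi$ and $\Phi$ from this scheme by inverting and integrating the relations in \eqref{eq:S2}. Consistency and volume preservation are thus consequences of the known properties of the underlying Quispel scheme, not of a direct expansion of the displayed formulas.

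The paper's approach explains where the otherwise opaque correction factors $m_1,m_2$ come from (they arise from the integration constant and $f_{\mathrm{correct}}$), whereas your approach is more self-contained and does not rely on an external construction. Either argument suffices; yours is shorter once the formulas are given, the paper's is more informative about their origin. One minor point: your solvability discussion tacitly also requires $a_{13}\neq 0$ (for the first equation), not only $a_{12}\neq 0$; this is implicit in your observation that the leading coefficient is $m_1/(\Delta t\,a_{13})$, but it is worth stating explicitly.
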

The proof of this result is similar to that for \eqref{eq:S1_quispel}. For completeness, it can be found in Appendix~\ref{app:S2}.

\section{Conclusions and remarks}

In this paper, we have studied the thirty-six generating one-forms for volume preserving mappings in $\RR^3$. 
This is the first $n$-dimensional case for which there is a difference between area preservation, well understood using the tools of symplectic geometry and symplectic forms, and volume preservation.

By imposing equivalence relations (equivalence under relabeling and equivalence under adjunction), we have shown that all cases can be generated by five classes of differential one forms.

We have classified these five cases in terms of known numerical methods that preserve volume and denoted them as  SE+SE (already identified by Xue and Zanna in \cite{xue2014gf}), DL+SE, DL+DL, $S_1$ and $S_2$. Except for the special cases $S_1, S_2$, the classes can be naturally associated to the splitting of the vector field into two $2D$ Hamiltonian systems or into two Lagrangian system, solved by a symplectic method  (symplectic Euler, SE) or a discrete Lagrangian approach (DL), or both. 

Classes $S_1$ and $S_2$, both defined by a generating one-form of the type $\lambda = \phi dx_1 + \Phi d X_1$, are, to the best of our knowledge, novel cases. Whereas the other classes admit a natural mechanical interpretation (either Hamiltonian or Lagrangian mechanics), it is not clear whether there exists a natural mechanical interpretation for the classes $S_1$ and $S_2$. The corresponding generating forms can be used to generate well defined volume preserving maps, however, for general vector fields, these maps are hightly implicit and do not seem to lead to explicit or efficient numerical methods. For completeness, we have shown possible choices of functions $\phi, \Phi$, that yield consistent methods for linear vector fields. These two cases need a deeper understanding and will be the subject of future research.

\section*{Acknowledgement}
The work has been supported by NFR grant no.\ 191178/V30, under the project Geometric Numeric Integration in Applications,
by the SpadeACE~Project
and by the J.C.~Kempe memorial fund (grant no.~SMK-1238).

\appendix
\section{Appendix A}
\label{app:S1}
The two choices \eqref{eq:S1_quispel}-\eqref{eq:S1_AZ} correspond to two different techniques to determine $\phi, \Phi$. 

The first one, \eqref{eq:S1_quispel}, is based on the \emph{correction method} by Quispel \cite{GRW199526}: we determine three maps $f_1, f_2, f_3$ that give a volume preserving transformation. Thereafter we invert $f_1$ and use integration, differentiation and some other algebraic manipulations to obtain suitable $\phi, \Phi$. 

The second choice, \eqref{eq:S1_AZ}, is based on the following idea: choose a consistent method for $X_1$, depending on $x_3$, say for instance Forward Euler. Because of the linearity of the vector field, this always determines $x_3 = \phi_{x_2}(x_1,X_1,x_2)$, hence $\phi$, up to a function depending only on $x_1,X_1$. Next, we think of $x_3$ as a function of $x_2$.
Now, use a consistent map to obtain $X_3$, as function of $X_1, x_1, X_2$ and $x_3(x_1, X_1,s)$, where the occurrences of $x_2$ are replaced by $X_2$). Upon the replacement $x_2 \to X_2$ in $x_3$, the map is not necessarily consistent any longer, and some adjustments must be made, also to ensure consistency for $x_2$.

\begin{proof}\emph{[Prop.~\ref{th:S1}]}
We commence with the \eqref{eq:S1_quispel} case. Consider the implicit map 
\begin{align}
\label{ff}
X_1&=f_1(x_1,X_2,x_3),\\
x_2&=f_2(x_1,X_2,x_3),\\
X_3&=f_3(X_1,X_2,x_3),
\end{align}
and Quispel's correction method \cite{GRW199526}, to obtain
\begin{align}
\label{t5vp}
X_1&=x_1+\Delta t a_1(x_1,X_2,x_3),\\
X_2&=x_2+\Delta t a_2(x_1,X_2,x_3)- f_{correct}(x_1,X_2,x_3),\\
X_3&=x_3+\Delta t a_3(X_1,X_2,x_3),
\end{align}
where $f_{correct}$ is determined to obtain a volume preserving scheme,
\begin{align}
f_{correct}(x_1,X_2,x_3)&=\int^{X_2}_{const}\Delta t\frac{\partial a_3}{\partial x_3}(x_1+\Delta t a_1(x_1,X_2,x_3),X_2,x_3)-\Delta t\frac{\partial a_3}{\partial x_3}(x_1,X_2,x_3)\\
&+\Delta t^2\frac{\partial a_1}{\partial x_1}(x_1,X_2,x_3)\frac{\partial a_3}{\partial x_3}(x_1+\Delta t a_1(x_1,X_2,x_3),X_2,x_3)dX_2\\
&=\Delta t^2a_{11}a_{33}(X_2-const).
\end{align}
The integration constant should satisfy $const=\Delta t a_2(x_1,const,x_3)$\footnote{Due to consideration of consistency for $X_2$, see more details in \cite{GRW199526}.}, that is, 
\begin{equation*}
const=\frac{\Delta t a_{21}x_1+\Delta ta_{23}x_3}{1-\Delta ta_{22}}.
\end{equation*}
First of all, we calculate $X_2$ from the second equation of (\ref{t5vp}) which has the form,
\begin{equation}
\label{x2}
X_2=\frac{x_2+(\Delta ta_{21}x_1+\Delta ta_{23}x_3)k_2}{k_1},
\end{equation}
where $k_1=1+\Delta t^2a_{11}a_{33}-\Delta ta_{22}$ and $k_2=1+\Delta t^2\frac{a_{11}a_{33}}{1-\Delta ta_{22}}$.
Substituting the above equation into the first equation of (\ref{t5vp}), we can find $x_3$ in terms of variables $x_1, x_2, X_1$, denoted by $\tilde{x}_3(x_1,X_1,x_2)$. Substituting $\tilde{x}_3$ into the first equation of (\ref{eq:S1}) and integrating both sides, we obtain $\phi$
\begin{equation}
\phi=\frac{X_1-x_1-\Delta ta_{11}x_1-\Delta t ^2a_{21}a_{12}x_1\frac{k_2}{k_1}}{\Delta t a_{13}+\Delta t^2a_{23}a_{12}\frac{k_2}{k_1}}x_2-\frac{a_{12}}{k_1a_{13}+\Delta ta_{23}a_{12}k_2}\frac{x_2^2}{2}+\tilde{A}(x_1,X_1).
\end{equation}
 From the first equation of (\ref{t5vp}), we see that $x_3$ depends on the variables $x_1, X_1,X_2$. We then can solve $x_3$ in terms of  $x_1, X_1,X_2$ from that equation since we assume $a_{13}\neq0$, and denote  by $\hat{x}_3(x_1,x_2,X_1)$. We substitute $\hat{x}_3$ into the third equation of (\ref{t5vp}). From the third equations of (\ref{eq:S1}) and from  (\ref{t5vp}), we know that
\begin{equation*}
X_3(x_1,X_1,X_2)=\hat{x}_3(x_1,X_1,X_2)+\Delta t \mathbf{a}_3(X_1,X_2,\hat{x}_3(x_1,X_1,X_2))=-\partial_{X_2}\Phi .
\end{equation*}
From the above equation, we can integrate both sides with respect to $X_2$ and obtain
\begin{align}
\Phi &=-\frac{X_1-x_1-\Delta ta_{11}x_1}{\Delta ta_{13}}(1+\Delta ta_{33})X_2+X_2^2(-\frac{\Delta ta_{32}}{2}+\frac{a_{12}}{2a_{13}}(1+\Delta ta_{33}))\\
&-\Delta t a_{31}X_1X_2+\tilde{C}(x_1,X_1).
\end{align}
Without loss of generality, we set $\tilde{A}=0$. Using the second condition of (\ref{eq:S1}) , we obtain
\begin{equation*}
\tilde{C}_{x_1}=\frac{x_2}{\Delta a_{13}+\Delta ta_{12}a_{23}\frac{k_2}{k_1}}-\frac{(1+\Delta ta_{11})(1+\Delta ta_{33})}{ha_{13}}X_2,
\end{equation*}
From the first equation in (\ref{t5vp}), we have
\begin{equation*}
X_2=\frac{X_1-x_1-\Delta ta_{11}x_1-\Delta ta_{13}x_3}{\Delta ta_{12}},
\end{equation*}
By noticing the relation in (\ref{x2}), we obtain
\begin{equation*}
\tilde{C}_{x_1}=-\frac{X_1-x_1(1+\Delta ta_{11})\Delta ta_{23}k_2+\Delta t^2a_{13}a_{21}k_2x_1}{k_3},
\end{equation*}
where $k_3=\Delta ta_{13}(\Delta ta_{13}+\frac{k_2}{k_1}\Delta ta_{12}a_{23})$. 
From \eqref{t5vp} it is not difficult to see that (\ref{eq:S1_quispel}) gives a first order volume preserving method.

\vspace*{20pt}
Next, we consider the \eqref{eq:S1_AZ} case. Using the forward Euler method to solve the first equation of (\ref{linear}), that is
\begin{equation*}
X_1=(1+\Delta ta_{11})x_1+\Delta ta_{12}x_2+\Delta ta_{13}x_3.
\end{equation*}
Assuming that $a_{13}\neq0$ and solving for $x_3$, we have
\begin{equation}
\label{x3tilde}
x_3=\frac{X_1-(1+\Delta ta_{11})x_1-\Delta ta_{12}x_2}{\Delta ta_{13}}.
\end{equation}
Integrating both sides, we obtain
\begin{equation*}
\phi(x_1,X_1,x_2)=\frac{X_1-(1+\Delta ta_{11})x_1}{\Delta a_{13}}x_2-\frac{1}{2}\frac{a_{12}}{a_{13}}x_2^2+\tilde{A}(x_1,X_1),
\end{equation*}
where $\tilde{A}(x_1, X_1)$ is a function to be determined.

Using Euler method to solve the third equation of (\ref{linear}), where $x_3$ in (\ref{x3tilde}) is replaced by $x_3(x_1,X_1,X_2)$ and using $X_1=x_1+\Delta t a_1$ and $X_2=x_2+\Delta t a_2$, we obtain
\begin{align}
	X_3&=(1+\Delta ta_{33})\frac{X_1-(1+\Delta ta_{11})x_1-\Delta ta_{12}X_2}{\Delta ta_{13}}+\Delta 	ta_{31}x_1+a_{32}X_2\\
	&=(1+\Delta ta_{33})x_3+\Delta ta_{31}x_1+\Delta ta_{32}X_2-\Delta t\frac{a_{12}}{a_{13}}a_2+O(\Delta t^2).
\end{align}
There is a problem with the term $a_2$ (as in (\ref{linear})) in the above equation for consideration of consistency for $X_3$. So, we take $a_2=a_{21}x_1+a_{22}X_2+a_{23}X_3$ and substitute back into the above equation. Then, we obtain
\begin{equation*}
	(1-\Delta t\frac{a_{12}}{a_{13}}a_{23})X_3=(1+\Delta ta_{33})x_3+\Delta ta_{31}x_1+\Delta ta_{32}X_2-\Delta t	\frac{a_{12}}{a_{13}}(a_{21}x_1+a_{22}X_2)
\end{equation*}
Solving $X_3$ from the above equation, substituting it back into the third equation of (\ref{eq:S1}) and integrating both sides, we have
\begin{align}
	\Phi &=-\frac{1}{l_1}\big((1+\Delta ta_{33})(\frac{X_1-(1+\Delta ta_{11})x_1}{\Delta ta_{33}}X_2-\frac{1}{2}\frac{a_{12}}{a_{13}}X_2^2)+\Delta ta_{31}x_1X_2\\
	&+\frac{1}{2}\Delta ta_{32}X_2^2+\Delta t\frac{a_{12}}{a_{13}}(a_{21}x_1X_1+\frac{1}{2}a_{22}X_2^2)\big)+\tilde{C}(x_1,X_1),
\end{align}
where $l_1=1-\Delta t\frac{a_{12}}{a_{13}}a_{23}$ and $\tilde C$ is to be determined.
We have two functions $\tilde A, \tilde C$ to be determined and one equation (compatibility condition). The two functions are not independent, hence we set $\tilde{A}=0$. Using the compatibility condition $\partial_{X_1} \phi= \partial_{x_1} \Phi $, we obtain 
\begin{equation*}
\frac{x_2}{\Delta ta_{13}}=\frac{1}{l_1}(1+\Delta ta_{33})(1+\Delta ta_{11})X_2+\tilde{C}_{x_1}.
\end{equation*}
From the divergence-free condition $a_{22}=-(a_{11}+a_{33})$ we have
\begin{equation*}
X_2=x_2(1+\Delta ta_{22}+O(\Delta t^2))l_1+\tilde{C}_{x_1}.
\end{equation*}
Hence $X_2$ will be consistent provided that
\begin{equation}
	\tilde{C}=\Delta t a_{21}\frac{x_1^2}{2}+\Delta t a_{23}\frac{2X_1x_1-x_1^2(1+\Delta ta_{11})}{2\Delta ta_{13}}.
\end{equation}
From the proof, we see that (\ref{eq:S1_AZ}) gives a first order volume preserving method.
\end{proof}


\section{Appendix B}
\label{app:S2}


\begin{proof}\emph{[Prop.~\ref{th:S2}]}
The method generated by \eqref{eq:S2} and \eqref{eq:S2_quispel} is constructed in the same way as \eqref{eq:S1_quispel}. The implicit map reads
\begin{align}
\label{ffs2}
X_1&=f_1(x_1,x_2,X_3),\\
X_2&=f_2(X_1,X_2,x_3),\\
x_3&=f_3(x_1,x_2,X_3),
\end{align}
and the corrections method is 
\begin{align}
\label{t5vps2}
X_1&=x_1+\Delta t a_1(x_1,x_2,X_3),\\
X_2&=x_2+\Delta t a_2(X_1,x_2,X_3),\\
X_3&=x_3+\Delta t a_3(x_1,x_2,X_3)- f_{correct}(x_1,x_2,X_3),\\
\end{align}
where 
\begin{align}
	f_{correct}(x_1,x_2,X_3)&=\int^{X_3}_{const}\Delta t\frac{\partial a_2}{\partial x_2}(x_1+\Delta t 	a_1(x_1,x_2,X_3),x_2,X_3)-\Delta t\frac{\partial a_2}{\partial x_2}(x_1,x_2,X_3)\\
	&+\Delta t^2\frac{\partial a_1}{\partial x_1}(x_1,x_2,X_3)\frac{\partial a_2}{\partial x_2}(x_1+\Delta t 	a_1(x_1,x_2,X_3),x_2,X_3)dX_3\\
	&=\Delta t^2a_{11}a_{22}(X_3-const).
\end{align}
The integration constant should satisfy $const=\Delta t a_3(x_1,x_2,const1)$, that is, 
\begin{equation*}
	const1=\frac{\Delta t a_{31}x_1+\Delta ta_{32}x_2}{1-\Delta ta_{33}}.
\end{equation*}
The rest of the proof is similar to that of Prop.~\ref{th:S1}.

\end{proof}

\bibliographystyle{plain}
\bibliography{volumepreserving}

\end{document}